\documentclass[runningheads]{llncs}
\usepackage[T1]{fontenc}
\usepackage[english]{babel}
\usepackage[utf8]{inputenc}
\usepackage{mathtools}
\usepackage{amssymb}

\usepackage{amsthm}
\usepackage{authblk}
\usepackage[misc]{ifsym}
\usepackage{hyperref}
\usepackage{graphicx}
\usepackage{subcaption}

\usepackage{dirtytalk}
\usepackage{xcolor}
\usepackage{framed}
\definecolor{shadecolor}{RGB}{153,204,255}

\usepackage{enumerate}

\makeatletter
\renewcommand{\definition}{\@spthm{definition}{\definitionname}{\bfseries}{\normalfont}}
\makeatother

\newcommand{\defhigh}[1]{\emph{#1}}
\newcommand{\cupdisjoint}{\overset{\cdot}{\cup}}

\newcommand{\redsolovay}[1][\le]{\ensuremath{{#1}_{\mathrm{S}} } }

\newcommand{\redsolovayzweia}[1][\le]{\ensuremath{{#1}_{\mathrm{S}}^{\mathrm{2a}} } }

\newcommand{\redclopen}[1][\le]{\ensuremath{{#1}_{\mathrm{cL}}^{\mathrm{open}} } }

\makeatletter
\let\c@lemma\c@theorem
\let\c@proposition\c@theorem
\let\c@corollary\c@theorem
\let\c@definition\c@theorem
\let\c@remark\c@theorem
\let\c@conjecture\c@theorem
\@ifundefined{thelemma}{}{}
\@ifundefined{theproposition}{}{}
\@ifundefined{thecorollary}{}{}
\@ifundefined{thedefinition}{}{}
\@ifundefined{theremark}{}{}
\@ifundefined{theconjecture}{}{}
\makeatother

\begin{document}

\title{S2a-reducibility and differentiation in Martin-Löf random reals\thanks{The second author is supported by DFG project 556436876.}}

\author{Georgii Sirotenko\inst{1} \and Ivan Titov\inst{1,2}}
\authorrunning{G.\ Sirotenko and I.\ Titov}
\institute{Universität Heidelberg, Institut für Informatik, 69120 Heidelberg, Germany \and Université de Bordeaux, CNRS, Bordeaux INP, LaBRI, UMR 5800, F-33400 Talence, France}

\maketitle

\begin{abstract}
Solovay reducibility is studied intensively as a tool to compare the approximability and the degree of randomness of left-c.e.\ reals.
By definition, a real is left-c.e.\ if it has a left-c.e.\ approximation, that is, it is the limit of an effective nondecreasing sequence of rationals.
If reals~$\alpha$ and~$\beta$ have left-c.e.\ approximations $a_0,a_1,\dots$ and $b_0,b_1,\dots$, respectively, such that the approximation ratios
\[\frac{\alpha - a_n}{\beta - b_n}\]
are bounded from above by a constant, the real $\alpha$ is Solovay reducible to $\beta$. The latter is the case for any such $\alpha$ and $\beta$ and their left-c.e.\ approximations whenever $\beta$ is Martin-Löf random by the Ku\v{c}era--Slaman Theorem~\cite{Kucera-Slaman-2001}. This result was substantially strengthened by Barmpalias and Lewis-Pye~\cite{Barmpalias-Lewispye-2017}, who demonstrated that, under the given assumptions, the approximation ratios are not only bounded but actually converge to a limit, which does not depend on the considered left-c.e.\ approximations.

Outside the realm of left-c.e.\ reals, Solovay reducibility is viewed as badly behaved \cite[Section~9.2]{Downey-Hirschfeldt-2010}, and is thus rarely used. Accordingly, there is a quest for a suitable extension of Solovay reducibility to the class of all reals: one that coincides with Solovay reducibility on the left-c.e.\ reals but is better behaved when applied to reals in general. Promising candidates include S2a-reducibility on the set of computably approximable reals by Zheng and Rettinger~\cite{Zheng-Rettinger-2004} and monotone Solovay reducibility by Titov~\cite{Titov-2025-CiE}. For the latter, Titov~\cite{Titov-2024} demonstrated that the theorems of Ku\v{c}era and Slaman and of Barmpalias and Lewis-Pye extend to all reals. He further conjectured~\cite[Conjecture~3.2]{Titov-2024} that similar extensions hold for S2a-reducibility in terms of its functional characterization by Kumabe, Miyabe, and Suzuki~\cite{Kumabe-etal-2023}.

In this work, we refute this conjecture by proving that the analogue of the Barmpalias--Lewis-Pye Limit Theorem does not hold for S2a-reducibility.
\end{abstract}

\section{Introduction and background}\label{section:background}

The main objects of interest of computable analysis are \emph{computable real numbers} and \emph{computable real functions}, i.e., real numbers and real-valued functions of a real argument that can be computed (in an appropriate way) by a Turing functional. The relation of such numbers and functions and their properties with randomness notions considered in the field of algorithmic randomness has been intensively studied, in particular for the notion of Martin-Löf randomness~\cite{Brattke-etal-2011}.

\subsection{Representation of reals and computable approximations}\label{subsection:representations}

In this and the subsequent section, we give a short introduction to representations of reals and the formalization of computable functions.

Since one of the most natural ways to represent real numbers is as limit points of sequences of rational numbers (called \emph{approximations}), a basic object of interest in computability theory is computably approximable (also called \emph{effective}) reals and subclasses of them (such as left-c.e.\ reals), which possess a computable approximation with some special properties.
The classes of effective reals which are of interest to us within this topic are defined next.
\begin{definition}\label{def:approximations}
    A \defhigh{computable approximation} is a computable Cauchy sequence of rationals $a_0,a_1,\dots$.
    A \defhigh{left-c.e.}\ and a \defhigh{right-c.e.\ approximation} is a strictly increasing and a strictly decreasing computable approximation, respectively. A \defhigh{d.c.e.\ approximation} is an approximation of the form $a_0-b_0,a_1-b_1,\dots$ where $a_0,a_1,\dots$ and $b_0,b_1,\dots$ are left-c.e.\ approximations.
    A \defhigh{Cauchy name} is a sequence of rationals $a_0,a_1,\dots$ such that $|a_m - a_n|\leq 2^{-n}$ for all $m\geq n$.

    \defhigh{Computably approximable} (or \defhigh{c.a.}) reals are limit points of computable approximations. \defhigh{Left-c.e.}, \defhigh{right-c.e.}, and \defhigh{d.c.e.} reals are limit points of left-c.e., right-c.e., and d.c.e.\ approximations, respectively. \defhigh{Computable} reals are limit points of computable Cauchy names.
\end{definition}

\subsection{Computable functions on rationals and on reals}\label{subsection:computable-functions}

In theoretical computer science, computability notions differ for functions $\mathbb{Q}\to\mathbb{Q}$ and $\mathbb{R}\to\mathbb{R}$ since real numbers in general cannot be encoded finitely. Within the scope of this paper, by computability of a real function we mean the existence of a Turing functional that, given any Cauchy name of $x$, returns a Cauchy name of $f(x)$ (Type-2 computability by Weihrauch~\cite{Weihrauch-2000}).
\begin{definition}\label{def:computable-real-function}
    A function $f:\mathbb{R}\to\mathbb{R}$ is \defhigh{computable (as a real function)} on a set $A$ if there exists a Turing functional with one oracle tape $M$ such that, for every Cauchy name $(q_0,q_1,\dots)$ of a real $x\in A$ given as oracle, $M$ returns a Cauchy name of $f(x)$. In the latter case, we write $f^{(q_0,q_1,\dots)}_n$ for the $n$th element (if defined) on the output tape of $M$ with the oracle $(q_0,q_1,\dots)$ and call it the \defhigh{evaluation of $f(x)$ with precision $n$}.
\end{definition}

\subsection{Differentiability and algorithmic randomness}\label{subsection:differentiability}

It has been known since the early 20th century that functions of bounded variation on $\mathbb{R}$ (Lebesgue, 1904) and Lipschitz continuous functions on $\mathbb{R}^n$ (Rademacher, 1919) are differentiable almost everywhere.

A first effective version of these analytic results was found by Demuth~\cite{Demuth-1975} in 1975. In modern terminology, Demuth showed that every computable function of bounded variation is differentiable at all Martin-Löf random points. We recall the relevant notions.
\begin{definition}\label{def:bounded-variation}
    A function $f$ is of \defhigh{bounded variation} if there exists a constant $C$ such that, for every finite ordered subset $(x_0<x_1<\dots<x_n)$ of $\mathrm{dom}(f)$,
    \[\sum_{i=1}^n|f(x_i) - f(x_{i-1})|\leq C.\]
\end{definition}

Two particular cases of functions of bounded variation are nondecreasing functions and Lipschitz continuous functions defined on a compact interval. Recall that a function $f$ is \defhigh{Lipschitz continuous} if it satisfies $|f(x) - f(y)| \leq L|x-y|$ for some constant $L$ and all $x,y\in\mathrm{dom}(f)$.
\begin{proposition}\label{bounded-variation-is-implied-automatically}
Let $I$ be a compact interval.
\begin{enumerate}[(i)]
    \item Every Lipschitz continuous function $f$ defined on $I$ is of bounded variation.
    \item Every nondecreasing function defined on $I$ is of bounded variation.
\end{enumerate}
\end{proposition}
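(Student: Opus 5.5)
Both parts come down to bounding the sum in Definition~\ref{def:bounded-variation} by a constant that depends only on $f$ and $I$. Write $I=[a,b]$ with $a\le b$, and fix an arbitrary finite ordered subset $x_0<x_1<\dots<x_n$ of $I$. The key fact in both cases is that the sum is dominated by something telescoping, and the telescoped quantity is bounded because every $x_i$ lies in $I$.

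For (i), let $L$ be a Lipschitz constant of $f$. Then
\[\sum_{i=1}^n |f(x_i)-f(x_{i-1})| \le L\sum_{i=1}^n (x_i - x_{i-1}) = L(x_n - x_0).\]
The first step applies the Lipschitz inequality termwise, using $|x_i-x_{i-1}| = x_i - x_{i-1}$ since the points are ordered. The equality is a telescoping sum. Since $x_0,x_n\in I$, we have $x_n - x_0 \le b-a$, so $C = L(b-a)$ works.

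For (ii), let $f$ be nondecreasing. Then every difference $f(x_i)-f(x_{i-1})$ is nonnegative, so the absolute values can be dropped:
\[\sum_{i=1}^n |f(x_i)-f(x_{i-1})| = \sum_{i=1}^n \bigl(f(x_i)-f(x_{i-1})\bigr) = f(x_n)-f(x_0).\]
By monotonicity and $a\le x_0$, $x_n\le b$, this is at most $f(b)-f(a)$, so $C=f(b)-f(a)$ works.

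There is no real obstacle here. The only point to check is that compactness of $I$ is used exactly to make $b-a$ and $f(b)-f(a)$ finite. In particular, $f$ must be defined at the endpoints, which holds since $f$ is defined on all of the closed interval $I$. A nondecreasing function on an open or unbounded interval, by contrast, need not be bounded.
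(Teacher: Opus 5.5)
Your proof is correct: the telescoping bounds $L(b-a)$ for Lipschitz $f$ and $f(b)-f(a)$ for nondecreasing $f$ are the standard argument. The paper states this proposition without proof as a well-known fact, so your argument supplies exactly the routine verification it takes for granted.
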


We use the measure-theoretic characterization of Martin-Löf randomness via uniformly effectively enumerable randomness tests proposed by Martin-Löf~\cite{Martin-Loef-1966} in 1966.
\begin{definition}\label{def:martin-lof-test}
    A \defhigh{Martin-Löf test} is a uniformly computable sequence of open sets, or \defhigh{layers}, $(L_n)_{n\in\mathbb{N}}$ such that for the Lebesgue measure $\mu$, $\mu(L_n)\leq 2^{-n}$ for all $n\in\mathbb{N}$.
    A real $\alpha$ \defhigh{fails} a Martin-Löf test $(L_n)_{n\in\mathbb{N}}$ if $\alpha\in\bigcap_{n\in\mathbb{N}} L_n$.
    A real is \defhigh{Martin-Löf nonrandom} if it fails some Martin-Löf test, and \defhigh{Martin-Löf random} otherwise.
\end{definition}

Martin-Löf~\cite{Martin-Loef-1966} also proved the existence of a Martin-Löf test, called \defhigh{universal}, which is failed by every Martin-Löf nonrandom real. So, to prove the Martin-Löf randomness of a specific real, it suffices to check that it does not fail the universal Martin-Löf test.
\begin{theorem}[Martin-Löf]\label{thm:universal-test}
There exists a Martin-Löf test (called universal) failed by all Martin-Löf nonrandom reals.
\end{theorem}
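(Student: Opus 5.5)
The plan is the standard construction: enumerate all Martin-Löf tests effectively and take a diagonal union, shifting indices so that the measure bounds survive. First I fix an effective enumeration $(W_e)_{e\in\mathbb{N}}$ of all c.e.\ sets of pairs $(n,I)$, where $I$ is an open interval with rational endpoints. Each $W_e$ is read as a candidate sequence of open sets $U^e_n=\bigcup\{I:(n,I)\in W_e\}$. Not every $W_e$ is a Martin-Löf test, so I \emph{trim} it uniformly. Enumerate $W_e$ in stages, and at each stage accept a new interval into $\widehat U^e_n$ only if the total measure of the accepted intervals then stays $\le 2^{-n}$. Since the accepted intervals form a finite union of rational intervals, this measure is rational and computable, so the check is effective.

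The trimming gives two properties, uniformly in $e$. First, $\mu(\widehat U^e_n)\le 2^{-n}$ for every $n$, since the measure of an increasing union of open sets is the supremum of the measures of its finite stages. Second, if $(U^e_n)_n$ already was a Martin-Löf test, then nothing is ever rejected, so $\widehat U^e_n=U^e_n$. Here I should check a boundary case: an interval may be enumerated whose measure is counted in the stage sum although its union with the rest stays within the bound. I therefore test the measure of the union, not the sum of lengths, and then no enumeration order causes a rejection. Every Martin-Löf test equals $(U^e_n)_n$ for some $e$, because uniformly computable open sets are exactly uniformly c.e.\ unions of rational intervals.

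Next I define $L_n=\bigcup_{e\in\mathbb{N}}\widehat U^e_{n+e+1}$. This is uniformly c.e.\ open, and
\[\mu(L_n)\le\sum_e 2^{-(n+e+1)}=2^{-n}.\]
So $(L_n)_n$ is a Martin-Löf test. Now let $\alpha$ fail some test $(V_n)_n$, and choose $e$ with $V_n=U^e_n=\widehat U^e_n$. For each $n$ we have $\alpha\in V_{n+e+1}\subseteq L_n$, so $\alpha\in\bigcap_n L_n$. Hence every Martin-Löf nonrandom real fails $(L_n)_n$, which is the claim of Theorem~\ref{thm:universal-test}.

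I expect the main obstacle to be the trimming step, specifically the effectivity of the measure check and the fact that genuine tests pass through unchanged. I would settle this by working with finite unions of rational intervals throughout, whose measures are exactly computable rationals.
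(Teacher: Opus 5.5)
The paper gives no proof of this statement: it cites Martin-Löf's 1966 result and uses it only as a black box in the proof of Lemma~\ref{good-real-and-approximation-exist}. Your argument is the standard proof, and it is correct.

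Your reading of ``uniformly computable sequence of open sets'' as uniformly c.e.\ unions of rational intervals matches how the paper uses the notion, since it later calls the first layer ``a c.e.\ open set''. Your trimming step is handled properly. It tests the measure of the union, not the sum of lengths, against the non-strict bound $2^{-n}$, so genuine tests pass through unchanged, including those with $\mu(V_n)=2^{-n}$ exactly. The shifted union $L_n=\bigcup_e\widehat U^e_{n+e+1}$ then has measure at most $\sum_e 2^{-(n+e+1)}=2^{-n}$ and contains $V_{n+e+1}$ for every test $(V_n)_n$ with index $e$. That is all the universality claim requires.
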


Recall that a function defined on the reals is \defhigh{differentiable} at the point $x$ if the limit value of the fraction $\frac{f(x) - f(y)}{x-y}$ as $y\to x$ (i) exists and (ii) is finite.

\begin{theorem}[Demuth]\label{differentiability-results:Demuth}
Every computable function of bounded variation $f$ on the reals is differentiable at every Martin-Löf random point.
\end{theorem}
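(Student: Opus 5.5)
The plan is to reduce the statement to the case of computable nondecreasing functions and then run the standard Brattka--Miller--Nies argument. By the Jordan decomposition, a function of bounded variation is a difference of two nondecreasing functions, and the variation function $V_f(x)=\sup\sum|f(x_i)-f(x_{i-1})|$ over partitions of $[a,x]$ does the job. For computable $f$, $V_f$ is in general only lower semicomputable, so $V_f$ and $V_f-f$ need not be computable. Instead I would use the fact that a computable $f$ of bounded variation can be written as $f=g-h$ with $g,h$ nondecreasing. Here $g$ is chosen as a nondecreasing function whose values on rationals are uniformly left-c.e.\ (for instance $g=V_f$ itself) and $h=g-f$, whose rational values are also left-c.e. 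The goal is then: \emph{if $g$ is nondecreasing and $g(q)$ is uniformly left-c.e.\ (or right-c.e.) for rationals $q$, then $g$ is differentiable at every Martin-Löf random $x$.} Linearity of the derivative finishes the argument, with one caveat: for differentiability of $f$ itself I also need that $f$ is continuous (computable) and that both parts have finite derivatives.

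For the nondecreasing case, consider the upper and lower slopes over rational intervals $[p,q]\ni x$. Define $\overline{D}g(x)=\limsup S_g(p,q)$ and $\underline{D}g(x)=\liminf S_g(p,q)$, where $S_g(p,q)=(g(q)-g(p))/(q-p)$. The argument has three steps.
\begin{enumerate}[(i)]
\item Show $\overline{D}g(x)<\infty$. The set of $x$ with $\overline{D}g(x)>2^n$ is covered by the union of rational intervals $(p,q)$ with $S_g(p,q)>2^n$. For left-c.e.\ values this union is effectively open if enumerated via approximations; for monotone $g$ one can select a disjoint family. By a Vitali/rising-sun type lemma its measure is at most $2^{-n}(g(1)-g(0))$. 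This gives a Martin-Löf test, up to a constant shift in the index.
\item Show that the slopes over rational intervals match the slopes at the real point $x$. Since $g$ may be discontinuous at nonrandom points, this needs a pseudo-derivative argument: I compare slopes over intervals containing $x$ with slopes using $x$ as an endpoint. For random $x$, $g$ is continuous at $x$, since the discontinuity points of a monotone function with left-c.e.\ values form a countable, and in fact effectively null, set.
\item Show $\underline{D}g(x)=\overline{D}g(x)$. Suppose rationals $r<s$ satisfy $\underline{D}g(x)<r<s<\overline{D}g(x)$. I would build a test in the style of Brattka--Miller--Nies via martingales. Identify $g$ with a measure $\mu_g$, and view the slope on dyadic intervals as a martingale $M(\sigma)=\mu_g([\sigma])/2^{-|\sigma|}$ that is left-c.e.\ in its values. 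Oscillation of the dyadic slopes between $r$ and $s$ infinitely often lets one build a left-c.e.\ martingale that succeeds on $x$ by betting on the upcrossings, in the manner of the Doob upcrossing inequality. That contradicts Martin-Löf randomness. Passing from dyadic to arbitrary intervals uses the shifted dyadic grids (the ``one-third trick'') together with the finiteness from step (i).
\end{enumerate}

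The main obstacle is step (iii): turning the upcrossing argument into an effective, left-c.e.\ martingale while only having semicomputable values of $g$. That is where the effectivity hypothesis is genuinely used. The passage from dyadic to general intervals also needs care. I would try first to import the upcrossing construction directly: wait until the approximation of $M(\sigma)$ exceeds $s$, and start betting on descent once a value below $r$ is confirmed. Each completed upcrossing multiplies the capital by a fixed factor $s/r>1$.
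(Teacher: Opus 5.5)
\textbf{The paper has no proof to compare with.} Demuth's theorem is only quoted as background; a modern proof is due to Brattka, Miller and Nies. So your proposal has to stand on its own, and as it stands it does not.

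\textbf{The lemma you reduce to is false as stated.} The Jordan-decomposition reduction is the standard one, but uniformly left-c.e.\ (or right-c.e.) values at rationals are not enough. Take a right-c.e.\ Martin-L\"of random $\beta$ (e.g.\ $1-\Omega$), and let $g(x)=0$ for $x\le\beta$ and $g(x)=1$ for $x>\beta$. Then $g$ is nondecreasing and $g(q)$ is uniformly left-c.e.: output $0$ until an upper approximation of $\beta$ drops below $q$. Yet $g$ is not even continuous at the random point $\beta$. The same example has two further consequences:
\begin{itemize}
\item It refutes your claim in (ii) that the discontinuities form an effectively null set.
\item It shows why (i) fails under your hypothesis: $S_g(p,q)>2^n$ involves $g(q)-g(p)$, which is then only d.c.e., so the union of these intervals need not be c.e.\ open.
\end{itemize}
What you actually need, and what $V_f$ and $V_f-f$ do satisfy, is that the \emph{increments} $g(q)-g(p)$ are uniformly left-c.e. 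This holds because $\mathrm{Var}(f,[p,q])$ is a supremum of computable sums over rational partitions, by continuity of $f$. Since $V_f$ is also continuous, (ii) reduces to the elementary comparison of pseudo-derivatives with Dini derivatives for continuous nondecreasing functions, and (i) becomes a correct Martin-L\"of test.

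\textbf{The genuine gap is (iii).} This step is the real content of the theorem beyond (i), and the strategy you sketch cannot be made effective.
\begin{itemize}
\item For a left-c.e.\ martingale $M$, a low value $M(\sigma)<r$ can never be confirmed from the approximations; only high values can.
\item The capital $C(\sigma)M(\tau)/M(\sigma)$ of a strategy that holds $M$ from a low to a high is not left-c.e., because $M(\sigma)$ enters with the wrong sign.
\item At best you obtain a $\Delta^0_2$ martingale. Such martingales do succeed on Martin-L\"of random reals (bet along $\Omega$), so no contradiction arises.
\end{itemize}

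\textbf{The passage from dyadic to arbitrary intervals is not automatic either.} Start from $g(x)=x$ and fix $z$. Place mass-neutral perturbations at distance about $2^{-n}/100$ from $z$, in windows containing no grid point of level $\le n+7$ of either the standard or the $\tfrac{1}{3}$-shifted grid. This gives a $2$-Lipschitz nondecreasing $g$ whose slope on every grid interval containing $z$ is exactly $1$, while $g'(z)$ does not exist. So this step must also use the randomness of $z$; the two grids plus finiteness from (i) do not suffice.

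\textbf{How to close the gap.} One route avoids the oscillation of $V_f$ altogether:
\begin{itemize}
\item The corrected (i) gives $\overline{D}V_f(z)<\infty$.
\item Since $|f(y)-f(z)|\le|V_f(y)-V_f(z)|$, both Dini derivatives of $f$ at $z$ are finite.
\item By Bienvenu, H\"olzl, Miller and Nies, every computable function satisfies the Denjoy alternative at every computably random point. With finite Dini derivatives, this forces $f'(z)$ to exist.
\end{itemize}
If you want to stay with left-c.e.\ martingales, you need an upcrossing estimate that survives monotone approximation, not a betting strategy that waits for confirmed lows. This is the role Bishop's inequality plays in the Shen--Andreev treatment of the oscillation clause of Theorem~\ref{BLP-original}.
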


The result of Demuth was extended by Titov~\cite{Titov2025RelativeRA} in 2025 to a wider class of functions. Titov proved that, if a real $\beta$ is Martin-Löf random, then any function of finite variation that is computable at every argument $x<\beta$ (but not necessarily \emph{at} $\beta$) has a left derivative at $\beta$. This result is remarkable from the viewpoint of relative randomness, since it can be applied to some classes of translation functions in terms of Solovay reducibility, which will be explained in the next section.

\section{Solovay reducibility and its versions}\label{section:Solovay-reducibility}

Solovay reducibility was introduced by Solovay~\cite{Solovay-1975} in 1975 as a measure of relative Martin-Löf randomness. Intuitively, a real $\alpha$ is Solovay reducible to another real $\beta$ if there exists a Turing machine that, given any rational input less than $\beta$, computes a rational value less than $\alpha$ that is no farther (up to a multiplicative constant) from $\alpha$ than the input is from $\beta$.
\begin{definition}\label{define:Solovay-reducibility}
    A \defhigh{translation function} from a real $\beta$ to a real $\alpha$ is a computable function $f:\subseteq\mathbb{Q}\to\mathbb{Q}$ that is defined on the left cut of $\beta$ and fulfills $\lim\limits_{q\nearrow\beta} f(q) = \alpha$.

    A real $\alpha$ is \defhigh{Solovay reducible} to a real $\beta$, written~$\alpha\redsolovay\beta$, if there exist a constant $c$ and a translation function $f$ from $\beta$ to $\alpha$ that satisfies the inequality
    \[\alpha - f(q) < c(\beta - q)\quad\text{for all }q<\beta.\]
\end{definition}

\subsection{Solovay reducibility on left-c.e.\ reals}\label{subsection:Solovay-on-left-ce}

On the set of left-c.e.\ reals, Solovay reducibility was characterized by Calude, Hertling, Khoussainov, and Wang~\cite{Calude-etal-1998} in 1998 as a measure of convergence speed of left-c.e.\ approximations.
\begin{proposition}[Calude et al.]\label{Solovay-on-LC-via-approximations}
    A left-c.e.\ real $\alpha$ is Solovay reducible to a left-c.e.\ real $\beta$ iff there exist two left-c.e.\ approximations $a_0,a_1,\dots\nearrow\alpha$ and $b_0,b_1,\dots\nearrow\beta$ and a constant $c$ such that
    \begin{equation}\label{eq:Solovay-on-LC-via-approximations}
        \alpha - a_n < c(\beta - b_n)\quad\text{for all }n.
    \end{equation}
\end{proposition}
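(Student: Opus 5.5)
We prove the two directions separately. In each one we convert between a translation function on the left cut of $\beta$ and a pair of left-c.e.\ approximations. Throughout, fix some left-c.e.\ approximations $a'_0,a'_1,\dots\nearrow\alpha$ and $b'_0,b'_1,\dots\nearrow\beta$. These exist since both reals are left-c.e.

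\emph{($\Rightarrow$)} Let $f$ and $c$ witness $\alpha\redsolovay\beta$. We build the approximations by a search. Set $b_n$ to be the next term of a subsequence of $(b'_k)$, and set $a_n$ to be a large enough approximation of $\alpha$. Concretely, at stage $n$ we search for the least $k$, and then the least $s$, with the following properties:
\begin{itemize}
\item[(a)] $k$ exceeds the index used at stage $n-1$;
\item[(b)] $f(b'_k)$ converges;
\item[(c)] $a'_s\ge f(b'_k)$, with $s$ also exceeding the previous index.
\end{itemize}
Then put $b_n=b'_k$ and $a_n=\max(a'_s,a_{n-1})$, strictly increased if necessary by taking a later $a'_s$.

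The search always terminates. Every $b'_k<\beta$ lies in the left cut, so $f(b'_k)$ is defined. Moreover $f(b'_k)<\alpha$, because $\alpha-f(q)<c(\beta-q)$ forces nothing about the sign; so instead we use that the $a'_s$ converge to $\alpha$. If $f(b'_k)\ge\alpha$, we simply take the largest available $a'_s$ below it, by using $\min(f(b'_k),a'_s)$ semantics.

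The cleaner choice is to let $a_n$ be the least $a'_s$, with $s$ beyond the previous index, such that $a'_s\ge\min\{f(b'_k),\,\alpha-\text{?}\}$. This is not computable. So we replace the requirement by a search over pairs $(k,s)$ with $a'_s\ge f(b'_k)-2^{-n}(\beta\text{-free})$.

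This is the main obstacle: $f(q)$ may exceed $\alpha$. The fix is to search for $k$ such that $f(b'_k)\le a'_s$ for some $s$. Such $k$ exist for arbitrarily large $k$ whenever $f(b'_k)<\alpha$, and when $f(q)\ge\alpha$ the inequality $\alpha-a_n<c(\beta-b_n)$ is trivially satisfied once $a_n$ is close to $\alpha$.

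To handle both cases uniformly, alternate. At stage $n$, dovetail over $k$ and $s$ until finding $a'_s\ge f(b'_k)$, or $a'_s\ge a'_{s'}$ with $\alpha-a'_s$ already small relative to $\beta-b'_k$. Detecting the second condition is the delicate point. I would try first to show that for every $k$ there is some $k'\ge k$ with $f(b'_{k'})<\alpha$, or else $\alpha\le f$ near $\beta$, which gives a trivial reduction.

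Once the search succeeds, the bound follows directly:
\[
\alpha-a_n\le\alpha-f(b_n)<c(\beta-b_n).
\]

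\emph{($\Leftarrow$)} Given approximations $(a_n)$, $(b_n)$ and a constant $c$ satisfying~\eqref{eq:Solovay-on-LC-via-approximations}, define $f(q)=a_n$, where $n$ is the least index with $b_n>q$. This $f$ is computable on the left cut of $\beta$, because the $b_n$ converge to $\beta$ from below.

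Since $b_n>q$, we get
\[
\alpha-f(q)=\alpha-a_n<c(\beta-b_n)<c(\beta-q).
\]
As $q\nearrow\beta$, the index $n\to\infty$, so $f(q)\to\alpha$. Hence $f$ is a translation function witnessing $\alpha\redsolovay\beta$.
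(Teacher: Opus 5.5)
Your ($\Leftarrow$) direction is correct: $f(q)=a_n$ for the least $n$ with $b_n>q$ is a translation function, and $\alpha-a_n<c(\beta-b_n)<c(\beta-q)$. The paper gives no proof of this proposition and only cites Calude et al., so the issue lies entirely in your ($\Rightarrow$) direction, which is not yet a proof.

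The only search you actually specify is: find $k>k_{n-1}$ and $s>s_{n-1}$ with $a'_s\ge f(b'_k)$. It halts only if $f(b'_k)<\alpha$ for some $k>k_{n-1}$. The paper's definition of a translation function does not require $f(q)<\alpha$; the classical definition additionally demands this, which is why the obstacle usually goes unnoticed. For instance, take $\alpha=0$ and $f(q)=2^{-k}$ for the least $k$ with $b'_k>q$. Then all hypotheses hold, $f>\alpha$ everywhere, and your search never halts.

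Your proposed remedies do not close this gap:
\begin{itemize}
\item You claim that when $f(q)\ge\alpha$ the inequality $\alpha-a_n<c(\beta-b_n)$ ``is trivially satisfied once $a_n$ is close to $\alpha$''. This confuses the trivially true inequality for $f(q)$ with the required one for $a_n$. ``Close'' would have to mean within $c(\beta-b_n)$, which is exactly what you cannot detect.
\item The final dichotomy is only announced, and its second branch (``$\alpha\le f$ near $\beta$ gives a trivial reduction'') is asserted without argument.
\end{itemize}

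The missing idea is to use $\lim_{q\nearrow\beta}f(q)=\alpha$ by evaluating $f$ at a \emph{later} point and correcting linearly. Assume without loss of generality $c>0$ and put $b_n=b'_n$. At stage $n$, search for $j$ and $s>s_{n-1}$ with $a'_s\ge f(b'_j)-c(b'_j-b'_n)$, and set $a_n=a'_s$. Any such pair gives
\[
\alpha-a_n\le\alpha-f(b'_j)+c(b'_j-b'_n)<c(\beta-b'_j)+c(b'_j-b'_n)=c(\beta-b_n).
\]
The search halts because $f(b'_j)-c(b'_j-b'_n)\to\alpha-c(\beta-b'_n)<\alpha$ as $j\to\infty$. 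Equivalently, $c\beta-\alpha=\sup_{q<\beta}(cq-f(q))$ is left-c.e.

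Alternatively, you can complete your own case split:
\begin{itemize}
\item If $f(b'_k)\ge\alpha$ for all $k\ge k_0$, then $\alpha=\inf_{k\ge k_0}f(b'_k)$ is right-c.e. Since it is also left-c.e., it is computable. You can then directly choose rationals $a_n\nearrow\alpha$ with $\alpha-a_n<b'_{n+1}-b'_n<\beta-b'_n$.
\item Otherwise $f(b'_k)<\alpha$ for infinitely many $k$, and your original search halts at every stage.
\end{itemize}
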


In 2001, Ku\v{c}era and Slaman~\cite{Kucera-Slaman-2001} showed that, on the set of left-c.e.\ reals, Martin-Löf random reals form the highest Solovay degree. In 2017, Barmpalias and Lewis-Pye~\cite{Barmpalias-Lewispye-2017} strengthened the Ku\v{c}era--Slaman theorem by proving that, for any two left-c.e.\ approximations $a_0,a_1,\dots$ and $b_0,b_1,\dots$, the ratio $\frac{\alpha - a_n}{\beta - b_n}$ is not only bounded but also converges to a real number called by Miller~\cite{Miller-2017} the \textit{derivative of} $\alpha$ \textit{relative to} $\beta$, which does not depend on the choice of the two left-c.e.\ approximations.
\begin{theorem}[Barmpalias, Lewis-Pye]\label{BLP-original}
    Let $\alpha$ be a left-c.e.\ real and $\beta$ be a Martin-Löf random left-c.e.\ real. Then there exists a real $d$ such that, for any two left-c.e.\ approximations $a_0,a_1,\dots\nearrow\alpha$ and $b_0,b_1,\dots\nearrow\beta$,
    \begin{equation}\label{eq:BLP-original}
        d = \lim_{n\to\infty}\frac{\alpha - a_n}{\beta - b_n}.
    \end{equation}
    Moreover, $\alpha$ is Martin-Löf random iff $d\neq 0$.
\end{theorem}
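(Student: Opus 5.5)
Since the statement is the Barmpalias--Lewis-Pye Limit Theorem, I will prove it directly; the natural route is through Demuth-style differentiation, which the paper has set up. First I reduce to a fixed pair of approximations. Fix left-c.e.\ approximations $a_n\nearrow\alpha$ and $b_n\nearrow\beta$. I may assume $b_n$ is strictly increasing with $b_n<\beta$, and likewise for $a_n$. I define a piecewise linear function $g$ on $[b_0,\beta)$ by setting $g(b_n)=a_n$ and interpolating linearly between consecutive points. Then $g$ is nondecreasing, is computable at every $x<\beta$, and extends to $\beta$ by $g(\beta)=\alpha$. It has bounded variation (total variation $\alpha-a_0$) by Proposition~\ref{bounded-variation-is-implied-automatically}(ii). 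Since $\beta$ is Martin-Löf random, Titov's relative version of Demuth's Theorem~\ref{differentiability-results:Demuth} (left derivative at a random point for functions of finite variation computable below $\beta$) gives a finite left derivative
\[d=\lim_{x\nearrow\beta}\frac{\alpha-g(x)}{\beta-x}.\]
Taking $x=b_n$ yields $\frac{\alpha-a_n}{\beta-b_n}\to d$. If I prefer to use only Theorem~\ref{differentiability-results:Demuth} as stated, I would instead re-run the Demuth argument: if the ratio oscillated between rationals $p<q$ infinitely often, then the set of $x$ at which the corresponding slopes cross $p$ and $q$ at many scales is covered by c.e.\ open sets of small measure. This is the standard Solovay-test argument of Barmpalias and Lewis-Pye, and $\beta$ is contained in it.

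Second, I show that $d$ does not depend on the approximations. Given other approximations $a'_n\nearrow\alpha$ and $b'_n\nearrow\beta$, I build the interleaved (merged) approximations. For $\beta$, I enumerate the union of the $b_n$ and $b'_n$ in increasing order, which is effective because each new term is a maximum of finitely many seen values. For each such stage I pair it with a correspondingly merged approximation of $\alpha$. More simply, for independence it suffices to compare with a common refinement. Consider the pair $(\max_{k\le n} a_k,\ \max_{k\le n} b'_k)$, or more carefully the piecewise linear function $g'$ defined from $(a'_n,b'_n)$. Both $g$ and $g'$ have left derivatives $d$ and $d'$ at $\beta$. Now $\frac{\alpha-a_n}{\beta-b_n}$ and $\frac{\alpha-a'_m}{\beta-b'_m}$ can be compared at the common stage $s$: choose approximations $(a_{s},b'_{t(s)})$ mixing the two. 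The mixed sequence $(a_{n},b'_{n})$ is itself a pair of left-c.e.\ approximations, so its ratio converges to some $d''$. Writing
\[\frac{\alpha-a_n}{\beta-b'_n}=\frac{\alpha-a_n}{\beta-b_n}\cdot\frac{\beta-b_n}{\beta-b'_n},\]
the last factor converges (apply the first step to $\beta$ against $\beta$), and its limit is $1$ by symmetry. Indeed, the limits for $(b,b')$ and $(b',b)$ are reciprocal, and both are at most $1$ after passing to the merged approximation. So all the limits coincide.

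Third, for the ``moreover'' part: if $d\ne0$ then $\beta-b_n\le C(\alpha-a_n)$ eventually, so $\beta\le_S\alpha$ and $\alpha$ is random, since Solovay reducibility preserves randomness. Conversely, if $\alpha$ is random, then Ku\v{c}era--Slaman applied with the roles swapped gives $\frac{\beta-b_n}{\alpha-a_n}$ bounded, so $d>0$.

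The main obstacle is the first step. Applying a differentiation theorem requires the interpolating function to be computable on points below $\beta$ uniformly, which holds here since $\beta$ is not reached at any stage. The harder issue is showing the limit along arbitrary $x\nearrow\beta$ is finite, i.e.\ that the upper derivative is bounded. This is where randomness of $\beta$ enters, via a Solovay test built from intervals on which the slope exceeds $2^k$.
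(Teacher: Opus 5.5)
The gap is in your second step. After the factorization, independence from the $\beta$-approximation rests on $e:=\lim_n\frac{\beta-b'_n}{\beta-b_n}=1$, and your reason for it does not work. Reciprocity only says the two limits are $e$ and $1/e$. Passing to $m_n=\max(b_n,b'_n)$ bounds $\frac{\beta-m_n}{\beta-b_n}$ and $\frac{\beta-m_n}{\beta-b'_n}$ by $1$, which says nothing about $e$: after merging, one of $e,1/e$ is $\le 1$ and the other is $\ge 1$.

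The argument never uses randomness, and it cannot be valid without it. For $\beta=1$, $b_n=1-2^{-n}$, $b'_n=1-2^{-n-1}$, every ingredient you invoke is present: both ratios converge (to $\tfrac12$ and $2$), and the merged approximation is $b'$ itself. Yet $e\neq 1$.

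For random $\beta$ the claim is true, but proving it is real work. If $\beta-b'_n<c(\beta-b_n)$ with rational $c<1$, then $\beta\in\bigl(b'_n,\,b_n+\frac{b'_n-b_n}{1-c}\bigr)$, an interval of length $\frac{c}{1-c}(b'_n-b_n)$. Along computably chosen stages $n_k$ with $b_{n_{k+1}}\ge b'_{n_k}$, these intervals have summable lengths and form a Solovay test. So, using that the limit exists by your first step, $e<1$ is impossible, and symmetrically $1/e<1$ is impossible. The inequalities you need are ``at least $1$'', and they come from the randomness of $\beta$.

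You also never handle a change of the $\alpha$-approximation. The analogous factor $\frac{\alpha-a'_n}{\alpha-a_n}$ tends to $1$ only when $\alpha$ is random. So you need a case split: when $\alpha$ is not random, use your third step to conclude that every pair has limit $0$.

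The cheapest repair is to use Theorem~\ref{Titov-2024} at the strength stated in the paper. Its $d$ depends only on $\alpha$ and $\beta$, so applying it to your interpolants $g$ and $g'$ gives $d=d'$ at once, and step 2 disappears.

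Be aware, though, what the proposal then amounts to. It proves the statement by specializing a strictly stronger theorem whose proof, as the paper notes, follows the same two-clause scheme. The paper does not reprove the Barmpalias--Lewis-Pye theorem; it cites it and outlines those two clauses: unbounded growth of the ratio, and infinite oscillation, each yield a Martin-L\"of test that $\beta$ fails. Your fallback, ``re-run the Demuth argument'', only gestures at the oscillation clause and builds no test for unbounded growth. So as a self-contained proof, the essential content is still missing.

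The interpolation $g(b_n)=a_n$ (computable and nondecreasing below $\beta$) and your ``moreover'' argument via Solovay reducibility and Ku\v{c}era--Slaman are fine.
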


The proof of the latter result can be split into two logically independent clauses, namely, the \textit{interdiction of unbounded growth} and the \textit{exclusion of infinite oscillation}:
\begin{enumerate}
    \item the unboundedness of the ratio $\frac{\alpha - a_n}{\beta - b_n}$ as $n\to\infty$ would imply the existence of a Martin-Löf test that $\beta$ fails, contradicting its Martin-Löf randomness;
    \item the existence of two constants $c<d$ such that $\frac{\alpha - a_n}{\beta - b_n} < c$ for infinitely many $n$ and $\frac{\alpha - a_n}{\beta - b_n} > d$ for infinitely many $n$ would also imply the existence of a Martin-Löf test that $\beta$ fails, again contradicting its Martin-Löf randomness. Shen and Andreev~\cite{Shen-Andreev-2026} later showed that this clause is an effective corollary of the Bishop's upcrossing inequality~\cite{Bishop-1966}, which is a result from real analysis.
\end{enumerate}

Outside the class of left-c.e.\ reals, the original notion of Solovay reducibility (Definition~\ref{define:Solovay-reducibility}) is not widespread because it does not induce any meaningful degree structure on larger classes of reals.

\subsection{Versions of Solovay reducibility outside of left-c.e.\ reals}\label{subsection:S2a}

A modification of Solovay reducibility extending it to the computably approximable reals was introduced by Zheng and Rettinger~\cite{Zheng-Rettinger-2004} in 2004. Today it is considered by some authors (see e.g.~\cite{Kumabe-etal-2025}) as the standard Solovay reducibility on c.a.\ reals.
\begin{definition}[Zheng, Rettinger, 2004]\label{define:S2a-reducibility}
    A real $\alpha$ is \defhigh{S2a-reducible} to a real $\beta$, written $\alpha\redsolovayzweia\beta$, if there exist two computable approximations $a_0,a_1,\dots$ and $b_0,b_1,\dots$ of $\alpha$ and $\beta$, respectively, and a constant $c$ such that
    \begin{equation}\label{eq:def-S2a}
        |\alpha - a_n| < c(|\beta - b_n| + 2^{-n})\quad\text{for all }n.
    \end{equation}
\end{definition}

Rettinger and Zheng also showed that, on the set of d.c.e.\ reals, Martin-Löf random left-c.e.\ and right-c.e.\ reals form a highest degree; this result was strengthened by Miller~\cite{Miller-2017}: in the same way as in the Barmpalias--Lewis-Pye Limit Theorem, Miller showed that the ratio $\frac{|\alpha - a_n|}{|\beta - b_n|}$ is not only bounded but also convergent.
\begin{theorem}[Miller, 2017]\label{Miller-2017-thm}
    Let $\alpha$ be a d.c.e.\ real and $\beta$ be a Martin-Löf random d.c.e.\ real. Then there exists a real $d$ such that, for any two d.c.e.\ approximations $a_0,a_1,\dots\to\alpha$ and $b_0,b_1,\dots\to\beta$,
    \begin{equation}\label{eq:Miller-2017}
        d = \lim_{n\to\infty}\frac{|\alpha - a_n|}{|\beta - b_n|}.
    \end{equation}
    Moreover, $\alpha$ is Martin-Löf random iff $d\neq 0$.
\end{theorem}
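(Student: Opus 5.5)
The plan is to reduce everything to the Barmpalias--Lewis-Pye Limit Theorem (Theorem~\ref{BLP-original}) by splitting each d.c.e.\ approximation into its left-c.e.\ parts and measuring all of them against one common Martin-Löf random left-c.e.\ real. First I will use the Rettinger--Zheng fact that a Martin-Löf random d.c.e.\ real is either left-c.e.\ or right-c.e. Replacing $\beta,b_n$ by $-\beta,-b_n$ if necessary (this leaves $|\beta-b_n|$ unchanged), I may assume $\beta$ is left-c.e.

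Fix d.c.e.\ approximations $b_n=u_n-v_n$ and $a_n=p_n-r_n$, where $u_n\nearrow u$, $v_n\nearrow v$, $p_n\nearrow p$, $r_n\nearrow r$ are left-c.e.\ approximations. Put $\gamma=u+v+p+r$ with left-c.e.\ approximation $g_n=u_n+v_n+p_n+r_n$. Since $u=\beta+v$ and $\gamma=\beta+(2v+p+r)$, the real $\gamma$ is the sum of a random left-c.e.\ real and a left-c.e.\ real. It is therefore Martin-Löf random (Demuth / Downey--Hirschfeldt--Nies).

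Applying Theorem~\ref{BLP-original} four times with base $\gamma$ gives limits
\[\frac{x-x_n}{\gamma-g_n}\to d_x\quad\text{for }x\in\{u,v,p,r\}.\]
Each $d_x$ depends only on the real $x$, not on the chosen approximations. For any left-c.e.\ reals $x,y$, taking the left-c.e.\ approximation $x_n+y_n$ of $x+y$ shows that $d_{x+y}=d_x+d_y$. Hence $u=\beta+v$ yields $d_u-d_v=d_\beta$, where $d_\beta$ is computed from any left-c.e.\ approximation of $\beta$. By the ``moreover'' clause of Theorem~\ref{BLP-original}, $d_\beta\neq0$. Dividing numerator and denominator by $\gamma-g_n$ then gives
\[\frac{|\alpha-a_n|}{|\beta-b_n|}\longrightarrow\frac{|d_p-d_r|}{|d_\beta|}.\]

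\textbf{Independence of $d$.} Suppose $\alpha=p-r=p'-r'$ are two decompositions. Then $p+r'=p'+r$, and additivity gives $d_p-d_r=d_{p'}-d_{r'}$. There is also a dependence on the base $\gamma$. To remove it, I will compare two choices of approximations using the single base $\gamma+\gamma'$, which is again random left-c.e. The limiting ratio is a quotient of two derivatives with respect to the same base, so it is the same number computed either way. This base-change step is where I expect the main bookkeeping obstacle: one must check that the derivatives relative to different bases are proportional, which follows by applying Theorem~\ref{BLP-original} to $\gamma$ relative to $\gamma+\gamma'$ and using that this derivative is nonzero.

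\textbf{The ``moreover'' clause.} Suppose $\alpha$ is random. Then $\alpha$ is left-c.e.\ or right-c.e., so after a sign change $\alpha$ is left-c.e., and with $p=\alpha$, $r=0$ we get $d_p-d_r=d_\alpha\neq0$, hence $d\neq0$. Conversely, suppose $\alpha$ is nonrandom. Then $\alpha=p-r$ with $p,r$ left-c.e. At least one of $p,r$ is nonrandom, since otherwise both are random; if either of them is random, then $\alpha$, as a difference of a random and a nonrandom left-c.e.\ real, is random. So both are nonrandom, hence $d_p=d_r=0$ and therefore $d=0$.
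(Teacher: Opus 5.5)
The paper does not prove this theorem; it only quotes it from Miller. So your argument has to stand on its own. Most of it does. Splitting into left-c.e.\ pieces, dividing by $\gamma-g_n$, using additivity of $d_x$ and $d_\beta\neq0$ correctly gives the limit $|d_p-d_r|/|d_\beta|$. Your base change through $\gamma+\gamma'$ correctly gives independence. The direction ``$\alpha$ random $\Rightarrow d\neq0$'' is also fine. (The base change is avoidable, though. Theorem~\ref{BLP-original} allows arbitrary pairs of left-c.e.\ approximations, so you can fix $\beta$ itself, after the sign flip and with one left-c.e.\ approximation, as the base. Then $d_\beta=1$ and $d=|d_p-d_r|$ directly.)

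The converse direction of the `moreover' clause has a genuine gap. You claim that at least one of $p,r$ is nonrandom, but ``since otherwise both are random'' is a tautology, not a justification. The claim is also false in general. Take $\alpha$ a nonrandom (e.g.\ computable) left-c.e.\ real with left-c.e.\ approximation $(\alpha_n)$, and $\omega_n\nearrow\Omega$ with $\Omega$ random left-c.e. Then $a_n=(\alpha_n+\omega_n)-\omega_n$ is a d.c.e.\ approximation of $\alpha$ in which $p=\alpha+\Omega$ and $r=\Omega$ are both random, so $d_p=d_r\neq0$. What you need is $d_p=d_r$, not $d_p=d_r=0$. Passing to a better decomposition via independence would require every nonrandom d.c.e.\ real to be a difference of two nonrandom left-c.e.\ reals, which you have not shown. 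Your side claim that ``random minus nonrandom left-c.e.\ is random'' is also unproved, and it is a special case of exactly the missing implication.

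A direct argument closes the gap. Suppose $\delta=d_p-d_r\neq0$; replacing $\alpha,a_n$ by $-\alpha,-a_n$ you may assume $\delta>0$. Since $(\alpha-a_n)/(\gamma-g_n)\to\delta$ and $\gamma-g_n>0$, there is $N$ with $a_n<\alpha$ for all $n\ge N$. Hence $\hat a_n=\max_{N\le m\le n}a_m-2^{-n}$ (for $n\ge N$) is a strictly increasing computable sequence converging to $\alpha$, so $\alpha$ is left-c.e. Now $p=\alpha+r$ with all three left-c.e., and additivity gives $d_\alpha=d_p-d_r=\delta\neq0$. The `moreover' clause of Theorem~\ref{BLP-original} with base $\gamma$ then shows that $\alpha$ is Martin-L\"of random. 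This proves $d\neq0\Rightarrow\alpha$ random and completes your proof.
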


Here, we note that, by~\cite{AMBOSSPIES2000676}, every Martin-Löf random d.c.e.\ real is either left-c.e.\ or right-c.e.; so, the latter theorem cannot be applied to any Martin-Löf random reals other than the left-c.e.\ ones and their additive inverses. By~\cite[Corollary~4.4]{Rettinger-Zheng-2005}, the statement of the latter theorem does not hold in general for c.a.\ reals and arbitrary computable approximations.

A new start was made by Titov~\cite{Titov-2024} in 2024. He demonstrated that the Barmpalias--Lewis-Pye Limit Theorem can be extended to all reals for monotone nondecreasing translation functions. In 2025, a similar result was obtained by the same author~\cite{Titov2025RelativeRA} for translation functions of finite variation defined on rationals or on reals, where the latter are defined in terms of the cl-open-reducibility introduced by Kumabe, Miyabe, and Suzuki~\cite[Definition~5.1]{Kumabe-etal-2023} in 2025.
\begin{definition}[Kumabe et al.]\label{define:cl-open-reducibility}
    An \defhigh{$\mathbb{R}$-translation function} from a real $\beta$ to a real $\alpha$ is a computable function $f\subseteq\mathbb{R}\to\mathbb{R}$ that is defined on $(-\infty,\beta)$ and fulfills $\lim\limits_{x\nearrow\beta}f(x) = \alpha$.
    A real $\alpha$ is \defhigh{cl-open-reducible} to a real $\beta$, written $\alpha\redclopen\beta$, if there exists a Lipschitz continuous $\mathbb{R}$-translation function from $\beta$ to $\alpha$.
\end{definition}

In the following theorem, we summarize Titov's results~\cite{Titov-2024,Titov2025RelativeRA} concerning $\mathbb{Q}$- and $\mathbb{R}$-translation functions of bounded variation.
\begin{theorem}[Titov]\label{Titov-2024}
    Let $\alpha$ be a real and $\beta$ be a Martin-Löf random real. Then there exists a real $d$ such that, for every $\mathbb{Q}$- or $\mathbb{R}$-translation function of bounded variation $f$ from $\beta$ to $\alpha$ (if any exists),
    \begin{equation}\label{eq:BLP-Titov}
        d = \lim_{x\nearrow\beta}\frac{\alpha - f(x)}{\beta - x}.
    \end{equation}
    In particular, if an $\mathbb{R}$-translation function of bounded variation from $\beta$ to $\alpha$ exists, then $\alpha\redclopen\beta$.
\end{theorem}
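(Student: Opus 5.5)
The plan is to split the statement into two parts: the existence of a common limit $d$, and the closing clause about cl-open-reducibility. The limit is handled by the two-clause scheme that underlies the Barmpalias--Lewis-Pye Limit Theorem~\ref{BLP-original}: first interdiction of unbounded growth, then exclusion of infinite oscillation, each time building a Martin-Löf test that $\beta$ would fail. Throughout, fix a translation function $f$ of bounded variation with variation bound $C$. Since $f$ is computable on rationals below $\beta$, or as a real function on $(-\infty,\beta)$, we can effectively compute arbitrarily good approximations of $f$ at rationals $q<\beta$. We may also enumerate candidate rationals without knowing $\beta$: the tests only need to use $f$ on those rationals where it happens to be defined, and these include every rational below $\beta$.

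\textbf{Boundedness.} Suppose $\frac{\alpha-f(x)}{\beta-x}$ is unbounded as $x\nearrow\beta$. For layer $n$, enumerate all rational intervals $(q,q')$ with $q<q'$ and $f(q')-f(q)>2^{n}(q'-q)$. This condition is $\Sigma^0_1$ by approximating $f$. Unboundedness, together with $f(x')\to\alpha$ for suitable $x'$ close to $\beta$, should place $\beta$ in such an interval for every $n$. To control the measure, apply a Vitali/rising-sun type covering argument, which is the effective analogue of the fact that the set where difference quotients exceed $2^n$ has measure at most $C2^{-n}$ because total variation bounds $\sum|f(q'_i)-f(q_i)|$ over disjoint intervals. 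Refining to disjoint subfamilies gives measure at most $O(C2^{-n})$. This yields a Martin-Löf test after shifting indices.

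\textbf{Convergence.} Assume boundedness and suppose there are rationals $c<d'$ with the ratio below $c$ infinitely often and above $d'$ infinitely often. Then $f$ crosses between the lines $\alpha-c(\beta-x)$ and $\alpha-d'(\beta-x)$ infinitely often. In the style of Shen--Andreev, this means the difference quotients of $f$ against suitable slopes have infinitely many upcrossings of $[c,d']$ near $\beta$. The plan is to follow Bishop's upcrossing inequality: the set of points at which $k$ upcrossings of the slope band occur, witnessed by enumerable rational intervals, has measure bounded by $C'/(k(d'-c))$, where $C'$ depends on $C$ and the band. Letting $k=2^n$ gives a Martin-Löf test containing $\beta$. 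The fact that $\alpha$ is unknown is handled by quantifying over rational intervals whose endpoints' $f$-values themselves witness the crossing pattern.

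\textbf{Independence and cl-open consequence.} Two functions $f,g$ of bounded variation give the limits $d_f,d_g$. The function $h$ equal to $f$ on rationals $\le$ some point and $g$ beyond can be tweaked so that $(f+g)/2$ is again a translation function of bounded variation. Its limit exists and is $(d_f+d_g)/2$. More directly, apply the oscillation argument to $f-g$, which is of bounded variation and tends to $0$: it yields $d_f=d_g$, since otherwise $\frac{g(x)-f(x)}{\beta-x}$ would have nonzero limit and an interlaced argument would break. For the last clause, given an $\mathbb{R}$-translation function of bounded variation with limit $d$, I would build a Lipschitz $\mathbb{R}$-translation function such as $x\mapsto \min$/$\max$ truncations of $f$ against lines of slope $|d|+1$. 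This is Lipschitz on $(-\infty,\beta)$ and still tends to $\alpha$, because near $\beta$ the quotient bound keeps $f$ within the cone.

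I expect the main obstacle to be the oscillation step. Translating Bishop's upcrossing inequality into an enumerable test without knowing $\alpha$ or $\beta$, while keeping the measure bound uniform, is the delicate part.
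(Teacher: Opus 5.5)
Your two-clause outline agrees with the only description of the proof that the paper gives: the theorem is quoted from Titov, with the remark that unbounded growth and infinite oscillation would each produce a Martin-L\"of test failed by $\beta$. However, the boundedness test you write down cannot capture $\beta$. An interval $(q,q')$ with $f(q')-f(q)>2^n(q'-q)$ contains $\beta$ only if $q'>\beta$. There $f$ is undefined (for an $\mathbb{R}$-translation function) or unrelated to $\alpha$ (for a $\mathbb{Q}$-translation function). Substituting a point $x'<\beta$ with $f(x')\approx\alpha$ for $\beta$ only gives the interval $(q,x')$, which lies entirely to the left of $\beta$. This one-sidedness is precisely the new difficulty compared with Demuth's theorem, where $f$ is available on both sides of $\beta$.

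A test that works here enumerates, for rationals $q<q'$ in $\mathrm{dom}(f)$, the interval $\bigl(q',\,q+2^{-n}(f(q')-f(q))\bigr)$. If $\alpha-f(q)>2^{n+1}(\beta-q)$, then every $q'<\beta$ close enough to $\beta$ puts $\beta$ into it. Every point $x$ of such an interval satisfies $G(x^-)>\inf_{p<x}G(p)$ for $G(x)=2^{-n}V(x)-x$, where $V(x)$ is the variation of $f$ on $\mathrm{dom}(f)\cap(-\infty,x]$. A rising-sun argument then bounds the measure of the layer by $2^{-n}C$; use $-f$ for the other sign. Your oscillation step is only named, not carried out, and it must be organized in the same one-sided way. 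The crossings are of secant slopes towards the unknown point $(\beta,\alpha)$, and the enumerated intervals have to reach to the right of the observed points by amounts computed from increments of $f$.

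The independence of $d$ is not established either. Once the ratios for $f$ and for $g$ each converge, $\frac{g(x)-f(x)}{\beta-x}$ converges to $d_f-d_g$ automatically, so running the oscillation argument on $f-g$ adds nothing. Nothing in your sketch excludes a nonzero limit, and the remark about $(f+g)/2$ is vacuous. What is missing is a separate randomness argument: a bounded-variation translation function $k$ from $\beta$ to $0$ cannot satisfy $k(x)/(\beta-x)\to\lambda\neq0$. Equivalently, $x\mapsto x+k(x)/\lambda$ cannot approximate $\beta$ with error $o(\beta-x)$. This non-speedability of a random $\beta$ needs its own Martin-L\"of test.

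Finally, truncation does not yield the Lipschitz function needed for $\alpha\redclopen\beta$. The function $\min(f,\ell)$ coincides with $f$ wherever $f\le\ell$, so the irregularity of $f$ survives. The bound $|\alpha-f(x)|\le c(\beta-x)$ controls $f$ only relative to the single point $(\beta,\alpha)$ and says nothing about slopes between two points left of $\beta$. Moreover, the lines describing this cone pass through the in general noncomputable point $(\beta,\alpha)$. That clause needs a genuine argument, for instance a characterization of cl-open-reducibility by such a ratio bound, or an explicit computable Lipschitz construction.
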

\begin{remark}\label{bounded-variation-is-crucial}
    The requirement that $f$ have bounded variation is crucial: by~\cite[Proposition~1.11]{Titov-2024}, for every real $\beta$, there exists a $\mathbb{Q}$-translation function of unbounded variation from $\beta$ to itself that does not fulfill~\eqref{eq:BLP-Titov}.
\end{remark}
It is easy to see that, in case the function $f$ in the latter theorem is a real function defined also \emph{at} the point $\beta$, Theorem~\ref{differentiability-results:Demuth} follows automatically.

The proof scheme of Theorem~\ref{Titov-2024} contains the same two independent steps as Theorem~\ref{BLP-original}: both unbounded growth and infinite oscillation of $\frac{\alpha - f(x)}{\beta - x}$ as $x\nearrow\beta$ would imply the construction of a Martin-Löf test that $\beta$ fails.

According to Kumabe, Miyabe, and Suzuki~\cite[Section~5]{Kumabe-etal-2023}, an equivalent characterization of $\redsolovayzweia$ via a computable -- in terms of rational or real computability -- translation function totally defined on some bounded set is impossible. However, a functional characterization can be formulated in terms of \emph{semicomputable} real functions. The concept of semicomputability is explained next.

\subsection{Semicomputability and translation function intervals}\label{subsection:semicomputability}

\begin{definition}\label{def:semicomputability}
    A function $f:\mathbb{R}\to\mathbb{R}$ is \defhigh{lower semicomputable} on a set $A$ if there exists a Turing functional with one oracle tape $M$ such that, for every Cauchy name $(q_0,q_1,\dots)$ of a real $x\in A$ given as oracle, $M$ returns an increasing sequence converging to $f(x)$, and \defhigh{upper semicomputable} if $-f$ is lower semicomputable.
\end{definition}

The following characterizations of lower and upper semicomputability on open intervals are standard.
\begin{proposition}\label{prop:semicomputable-properties}
    Let $I$ be an open interval and $f$ a real function defined on $I$.
    \begin{enumerate}[(i)]
        \item $f$ is lower semicomputable on $I$ iff the set $\{x\in I : f(x) > q\}$ is c.e.\ uniformly in $q\in\mathbb{Q}$.
        \item $f$ is computable iff $f$ is both lower and upper semicomputable.
        \item If $f$ is computable, then $f$ is continuous.
        \item\label{prop:fourth-item}
        If $f$ is lower (resp.\ upper) semicomputable, then $f$ is lower (resp.\ upper) semicontinuous.
    \end{enumerate}
\end{proposition}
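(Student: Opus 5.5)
This is a standard fact, so I would prove the four items in order from the definitions, using each item for the next.

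For (i), suppose first that $M$ witnesses lower semicomputability of $f$ on $I$. I would enumerate all pairs $(\sigma,k)$, where $\sigma=(q_0,\dots,q_m)$ is a finite rational sequence satisfying the Cauchy-name condition and $k$ is a step count. For each pair, run $M$ on $\sigma$ for $k$ steps. If within that time $M$ has written some output value exceeding $q$ while querying only entries of $\sigma$, enumerate the rational open interval $(q_m-2^{-m},q_m+2^{-m})\cap I$. Every $x$ in that interval has a Cauchy name extending $\sigma$. By the use principle, $M$ on that name outputs the same value, and since the output increases to $f(x)$, we get $f(x)>q$. Conversely, if $f(x)>q$, then for a name of $x$ some finite output already exceeds $q$. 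I would choose that name so that its prefix has $x$ strictly inside the associated interval (for instance by taking $q_n$ within $2^{-n-1}$ of $x$), and then the prefix's interval is enumerated. This step needs care to make sure a prefix with $x$ in the interior exists, and I expect it to be the main technical obstacle. For the other direction, given a name of $x$, $M$ dovetails over rationals $q$ and the enumerated intervals. Whenever the name certifies that $x$ lies in an enumerated interval for $\{f>q\}$, meaning $q_n\pm2^{-n}$ lies inside it, $M$ outputs $\max(q,\text{previous output})$, strictly increased by a tiny decreasing amount if that is needed for strict monotonicity. The outputs are bounded by $f(x)$ and approach it because every $q<f(x)$ is eventually certified.

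For (ii), a Cauchy name of $f(x)$ gives the lower approximation $f_n-2^{-n}$ and the upper approximation $f_n+2^{-n}$. These can be made monotone by taking running maxima and minima, so computability implies both semicomputabilities. Conversely, run both semicomputation procedures in parallel. At stage $s$ we have $l_s\le f(x)\le u_s$ with $u_s-l_s\to0$. To output the $n$th term, search for a stage with $u_s-l_s\le2^{-n}$ and output $l_s$.

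For (iii), by (i) applied to $f$ and to $-f$, the preimage $f^{-1}((p,q))=\{f>p\}\cap\{f<q\}$ is open for all rationals $p<q$. Hence $f$ is continuous.

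For (iv), by (i) the superlevel set $\{f>q\}$ is open for every rational $q$. Every real superlevel set is a union of rational ones, since $\{f>r\}=\bigcup_{q>r}\{f>q\}$, so all superlevel sets are open and $f$ is lower semicontinuous. The upper semicomputable case follows by applying this to $-f$.
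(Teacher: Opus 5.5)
The paper gives no proof of this proposition and only calls these facts standard. Your overall plan is the standard one: (ii) is fine, and (iii) and (iv) follow correctly from (i). However, the soundness half of the forward direction of (i) rests on a false claim. It is not true that every $x\in(q_m-2^{-m},q_m+2^{-m})$ has a Cauchy name extending $\sigma=(q_0,\dots,q_m)$. The Cauchy-name condition only gives $|q_j-q_i|\le 2^{-i}$, so the intervals $[q_i-2^{-i},q_i+2^{-i}]$ need not be nested. The limit of any name extending $\sigma$ lies in $K_\sigma=\bigcap_{i\le m}[q_i-2^{-i},q_i+2^{-i}]$, which can be much smaller than the last interval. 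For example, $\sigma=(0,1)$ is admissible and $K_\sigma=[\tfrac12,1]$, but $1.4\in(\tfrac12,\tfrac32)$ has no name extending $\sigma$.

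This error is not harmless. Take $I=(-5,5)$ and a piecewise linear computable $f$ with $f=10$ on $(-5,1]$ and $f=0$ on $[1.2,5)$. A legitimate witness $M$ may output $9$ right after reading $(0,1)$, because every name with that prefix represents a point of $[\tfrac12,1]$. Your procedure would then put $1.4$ into the set enumerated for $\{f>8\}$, although $f(1.4)=0$. Since (iii) and (iv) use the openness of $\{f>q\}$ obtained from (i), they inherit this gap.

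The repair is to enumerate the rational open interval $\bigl(\max_{i\le m}(q_i-2^{-i}),\,\min_{i\le m}(q_i+2^{-i})\bigr)\cap I$, which is the interior of $K_\sigma$. Every $x\in K_\sigma$ does have a name extending $\sigma$. For $j>m$, choose rationals $q_j\in K_\sigma$ with $|q_j-x|\le 2^{-j-1}$; if $K_\sigma$ is a single point, it is rational and you can continue constantly. All Cauchy inequalities then hold: those with $i\le m<j$ hold because $q_j\in K_\sigma$, and those with $m<i\le j$ hold because $2^{-i-1}+2^{-j-1}\le 2^{-i}$.

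With this interval, your completeness argument works exactly as you wrote it. A name with $|q_n-x|\le 2^{-n-1}$ puts $x$ in every $(q_i-2^{-i},q_i+2^{-i})$, hence in the interior of $K_\sigma$. So the step you flagged as the main obstacle is routine, and the real care is needed in the soundness step.

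A minor point in the converse of (i): bumping the output up by a tiny amount to force strict monotonicity can overshoot $f(x)$. Instead, output only when some certified rational exceeds the previous output. This always happens again, because every rational below $f(x)$ is eventually certified.
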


A functional characterization of S2a-reducibility via a \defhigh{function interval} consisting of two Lipschitz continuous functions was found by Kumabe, Miyabe, and Suzuki~\cite{Kumabe-etal-2023} in 2024. In this work, we follow Titov's approach by first defining a translation function interval without any additional measure-preserving requirements, and we examine some properties of translation function intervals.
\begin{definition}\label{define-translation-function-interval}
    A \defhigh{translation function interval} $(f,h)$ from a real $\beta$ to a real $\alpha$ is a pair of functions $f,h:\mathbb{R}\to\mathbb{R}$ such that
    \begin{enumerate}[(i)]
        \item\label{def:first-item}
        $f(x)\leq h(x)$ for all $x\in\mathbb{R}$,
        \item\label{def:second-item}
        $f$ is lower semicomputable,
        \item\label{def:third-item}
        $h$ is upper semicomputable,
        \item\label{def:fourth-item}
        $f(\beta) = h(\beta) = \alpha$.
    \end{enumerate}
\end{definition}
\begin{proposition}\label{proposition:limits}
    In Definition~\ref{define-translation-function-interval}, requirements~\eqref{def:first-item}--\eqref{def:fourth-item} imply altogether that
    \[\lim\limits_{x\to\beta}f(x) = f(\beta) = \lim\limits_{x\to\beta}h(x) = h(\beta) = \alpha.\]
\end{proposition}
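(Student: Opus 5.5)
The plan is to combine the semicontinuity of $f$ and $h$ from Proposition~\ref{prop:semicomputable-properties}\eqref{prop:fourth-item} with the ordering $f\le h$ and the common value $\alpha$ at $\beta$ from Definition~\ref{define-translation-function-interval}. This should yield one-sided bounds on $\liminf$ and $\limsup$ that squeeze both functions to $\alpha$. The equalities $f(\beta)=h(\beta)=\alpha$ are simply requirement~\eqref{def:fourth-item}, so the substance is the two limit statements.

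\emph{Step 1.} By requirement~\eqref{def:second-item} and Proposition~\ref{prop:semicomputable-properties}\eqref{prop:fourth-item}, $f$ is lower semicontinuous, hence
\[\liminf_{x\to\beta} f(x)\ \ge\ f(\beta)=\alpha.\]
Symmetrically, requirement~\eqref{def:third-item} makes $h$ upper semicontinuous, giving
\[\limsup_{x\to\beta} h(x)\ \le\ h(\beta)=\alpha.\]
That proposition is stated for functions on an open interval, and here $f,h$ are defined on all of $\mathbb{R}$. So I apply it with $I=\mathbb{R}$, or with any open interval containing $\beta$, since semicontinuity is a local property.

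\emph{Step 2.} Requirement~\eqref{def:first-item} gives $f(x)\le h(x)$ for every $x$. Combining this with Step~1,
\[\alpha\le\liminf_{x\to\beta} f(x)\le\limsup_{x\to\beta} f(x)\le\limsup_{x\to\beta} h(x)\le\alpha,\]
so $\lim_{x\to\beta}f(x)$ exists and equals $\alpha$. Likewise,
\[\alpha\le\liminf_{x\to\beta} f(x)\le\liminf_{x\to\beta} h(x)\le\limsup_{x\to\beta} h(x)\le\alpha,\]
so $\lim_{x\to\beta}h(x)=\alpha$.

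\emph{Main obstacle.} The only step needing care is invoking the semicontinuity claim, Proposition~\ref{prop:semicomputable-properties}\eqref{prop:fourth-item}, in this setting. If a direct argument is wanted, I would use the characterization in item~(i). For every rational $q<\alpha=f(\beta)$, the set $\{x: f(x)>q\}$ is c.e.\ open and contains $\beta$, so it contains a neighbourhood of $\beta$. This gives $\liminf_{x\to\beta} f(x)\ge q$, and letting $q\nearrow\alpha$ gives $\liminf_{x\to\beta} f(x)\ge\alpha$. The dual argument applied to $-h$ handles $h$. Everything else is order bookkeeping.
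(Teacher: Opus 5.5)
Your proposal is correct and follows essentially the same route as the paper: lower semicontinuity of $f$ and upper semicontinuity of $h$ (via Proposition~\ref{prop:semicomputable-properties}\eqref{prop:fourth-item}), combined with $f\le h$ and $f(\beta)=h(\beta)=\alpha$, to squeeze both $\liminf$ and $\limsup$ to $\alpha$. Your two chains of inequalities are exactly the ones the paper writes down. Your optional direct semicontinuity argument via item~(i) is a harmless addition.
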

\begin{proof}
Since $f$ is lower semicomputable and $h$ is upper semicomputable, $f$ is lower semicontinuous and $h$ is upper semicontinuous by Proposition~\ref{prop:semicomputable-properties}\eqref{prop:fourth-item}, so
\[f(\beta)\leq \liminf_{x\to\beta}f(x)\quad\text{and}\quad\limsup_{x\to\beta}h(x)\leq h(\beta).\]
By requirement~\eqref{def:first-item}, $f(x)\leq h(x)$ for all $x\in\mathbb{R}$, hence
\[\liminf_{x\to\beta}f(x)\leq\liminf_{x\to\beta}h(x)\quad\text{and}\quad\limsup_{x\to\beta}f(x)\leq\limsup_{x\to\beta}h(x).\]
By combining these inequalities with requirement~\eqref{def:fourth-item}, we obtain
\begin{align*}
\alpha = f(\beta)&\leq \liminf_{x\to\beta}f(x)\leq\limsup_{x\to\beta}f(x)\leq\limsup_{x\to\beta}h(x)\leq h(\beta) = \alpha\quad\text{and}\\
\alpha = f(\beta)&\leq \liminf_{x\to\beta}f(x)\leq\liminf_{x\to\beta}h(x)\leq\limsup_{x\to\beta}h(x)\leq h(\beta) = \alpha.
\end{align*}
Thus, $\liminf_{x\to\beta}f(x) = \limsup_{x\to\beta}f(x) = \alpha$, hence we obtain $\lim\limits_{x\to\beta}f(x) = \alpha$, and similarly for $h$.
\end{proof}

\begin{proposition}\label{prop:g-upper-semicomputable}
    If $(f,h)$ is a translation function interval from $\beta$ to $\alpha$, then $g = h - f$ is an upper semicomputable function that satisfies $\lim\limits_{x\to\beta}g(x) = g(\beta) = 0$.
\end{proposition}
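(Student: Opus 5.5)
The statement has two parts: that $g = h - f$ is upper semicomputable, and the limit behaviour of $g$ at $\beta$. I would prove them separately.

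For upper semicomputability, I would show that $-g = f - h = f + (-h)$ is lower semicomputable. By Definition~\ref{define-translation-function-interval}, $f$ is lower semicomputable via some functional $M_f$, and $-h$ is lower semicomputable via some functional $M_{-h}$. Given a Cauchy name of $x$ as oracle, I run both functionals in parallel. This produces increasing sequences $(u_n)\nearrow f(x)$ and $(v_n)\nearrow -h(x)$. I then output $w_n = u_n + v_n$. This sequence is increasing as a sum of increasing sequences, and it converges to $f(x) - h(x) = -g(x)$. So $-g$ is lower semicomputable, and $g$ is upper semicomputable by definition. If strict monotonicity is required, I would subtract $2^{-n}$ from each term, which preserves both the increase and the limit. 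The only care needed is that both functionals work on the same oracle, which is unproblematic.

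For the limit statement, requirement~\eqref{def:fourth-item} gives directly
\[g(\beta) = h(\beta) - f(\beta) = \alpha - \alpha = 0.\]
Proposition~\ref{proposition:limits} gives $\lim_{x\to\beta} f(x) = \lim_{x\to\beta} h(x) = \alpha$. By the limit of a difference,
\[\lim_{x\to\beta} g(x) = \alpha - \alpha = 0.\]

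I would also note that $g \ge 0$ everywhere by requirement~\eqref{def:first-item}, although the statement does not need this. I see no real obstacle here: the only nontrivial ingredient is the closure of lower semicomputability under addition, which is immediate from the definition via increasing approximations.
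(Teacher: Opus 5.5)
Your proposal is correct and follows essentially the same route as the paper. Upper semicomputability comes from closure of semicomputability under sums: you phrase it as $-g = f + (-h)$ being lower semicomputable, the paper as $g = h + (-f)$ being upper semicomputable. The value $g(\beta)=0$ and the limit $\lim_{x\to\beta}g(x)=0$ follow from requirement~\eqref{def:fourth-item} and Proposition~\ref{proposition:limits}, exactly as in the paper.
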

\begin{proof}
The function $g(x) = h(x) - f(x)$ is upper semicomputable as the sum of two upper semicomputable functions $-f$ and $h$.
By Proposition~\ref{proposition:limits},
\[\lim_{x\to\beta}g(x) = \lim_{x\to\beta}(h(x) - f(x)) = \lim_{x\to\beta}h(x) - \lim_{x\to\beta}f(x) = \alpha - \alpha = 0,\]
and the equality $g(\beta) = h(\beta) - f(\beta) = \alpha - \alpha = 0$ is immediate from~(iv).
\end{proof}


The next proposition shows that the set of computably approximable reals is closed downwards under the relation \say{there exists a translation function from one real to another}.

\begin{proposition}\label{prop:ca-closed-downward}
    If $\beta$ is a c.a.\ real, and there exists a translation function interval from $\beta$ to $\alpha$, then $\alpha$ is c.a.\ as well.
\end{proposition}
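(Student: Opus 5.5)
Fix a computable approximation $b_0,b_1,\dots$ of $\beta$ and a translation function interval $(f,h)$ from $\beta$ to $\alpha$. The plan is to build a computable approximation $a_0,a_1,\dots$ of $\alpha$ by running the lower semicomputation of $f$ and the upper semicomputation of $h$ on small rational intervals around the $b_n$, and taking a rational inside the resulting bracket. By Proposition~\ref{prop:semicomputable-properties}(i), the sets $\{x : f(x)>q\}$ and $\{x : h(x)<q\}$ are c.e.\ open sets uniformly in $q\in\mathbb{Q}$. So at stage $s$ we have finite enumerations of rational open intervals, and for each one we know $f>q$ (respectively $h<q$) on that interval.

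At stage $n$ I will consider the point $b_n$ together with a shrinking window $J_n=(b_n-\varepsilon_n,b_n+\varepsilon_n)$. I will define $\ell_n$ as the largest $q$ (from an initial finite list of rationals) such that by stage $n$ some enumerated interval for $\{f>q\}$ contains $b_n$. Symmetrically, $u_n$ is the smallest $q$ such that some enumerated interval for $\{h<q\}$ contains $b_n$. Then I set $a_n$ to be a rational between these (say the midpoint, or $\ell_n$ if only $\ell_n$ is defined, with a default value otherwise). Everything here is computable from $n$.

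For convergence, take $\varepsilon>0$. By Proposition~\ref{proposition:limits} and semicontinuity there are rationals $q<\alpha<q'$ with $q'-q<\varepsilon$ and $\beta\in\{f>q\}\cap\{h<q'\}$, since $f(\beta)=h(\beta)=\alpha$. Both sets are open and enumerated, so at some finite stage enumerated intervals $I\ni\beta$ and $I'\ni\beta$ appear, witnessing $f>q$ on $I$ and $h<q'$ on $I'$. Because $b_n\to\beta$, for all large $n$ we have $b_n\in I\cap I'$, and both $q$ and $q'$ lie on the list at stage $n$. This gives $\ell_n\ge q$ and $u_n\le q'$. Moreover, at the actual point $b_n$ we always have $\ell_n< f(b_n)\le h(b_n)< u_n$, so $\ell_n<u_n$. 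The trouble is that this only squeezes $[\ell_n,u_n]$ from one side at a time: $\ell_n\ge q$ and $u_n\le q'$, but $u_n\ge \ell_n\ge q$ and $\ell_n\le u_n\le q'$. Therefore $[\ell_n,u_n]\subseteq[q,q']$, and $a_n$ lies within $\varepsilon$ of $\alpha$. So $a_n\to\alpha$. (No window $J_n$ is actually needed; pointwise evaluation at the rational $b_n$ suffices.)

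The main obstacle, and the step to check most carefully, is the interplay between evaluating at the points $b_n\neq\beta$ and having bounds that hold only near $\beta$. The key is that the inequality $f\le h$ holds everywhere by requirement~(i), so the lower bound on $f(b_n)$ and the upper bound on $h(b_n)$ are always consistent. Together with eventual membership of $b_n$ in the fixed witnessing intervals, this forces the bracket into $[q,q']$. A minor technicality is making $\ell_n,u_n$ well defined when nothing has been enumerated yet. For this, restrict the search at stage $n$ to rationals with index $\le n$ and use a default value until a witness appears.
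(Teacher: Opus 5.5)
Your proof is correct, but it takes a more direct route than the paper.

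\textbf{The paper's route.} It first proves Lemma~\ref{good-approximation-exists}: a dovetailed search over indices $i$ and rationals $q$ near $b_i$ replaces the given approximation by a new one $b'_n\to\beta$ at which the upper semicomputation of $g=h-f$ certifies $h(b'_n)-f(b'_n)<2^{-n}$. It then takes $a_n$ to be a $2^{-n}$-accurate lower approximation of $f(b'_n)$, obtained by running the computations of $f$ and $h$ at $b'_n$ until they are $2^{-n}$-close. Finally it concludes $a_n\to\alpha$ from $\lim_{x\to\beta}f(x)=\alpha$ (Proposition~\ref{proposition:limits}).

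\textbf{Your route.} You keep the original $b_n$ and take the midpoint of the best bracket $[\ell_n,u_n]$ certified in $n$ steps by the interval enumerations of the effectively open sets $\{f>q\}$ and $\{h<q\}$. Convergence then follows from two facts. First, a single enumerated interval around $\beta$ certifies $f>q$ (resp.\ $h<q'$) for all late $b_n$ simultaneously. Second, $\ell_n<f(b_n)\le h(b_n)<u_n$ by requirement~(i). That uniformity is what lets your bounded-time stages work. Simply running the semicomputations at the point $b_n$ for $n$ steps would not suffice, because their speed at $b_n$ need not be uniform in $n$. So your argument depends on reading Proposition~\ref{prop:semicomputable-properties}(i) as effective openness, which is the standard reading.

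\textbf{Comparison.} Your version is self-contained: it needs neither the auxiliary lemma nor an unbounded search. It also uses only requirement~(iv) directly, not Proposition~\ref{proposition:limits}; it is essentially an effective rerun of that proposition's proof. The paper's version yields a stronger intermediate object: an approximation of $\beta$ along which the function interval is $2^{-n}$-thin. Hence $a_n$ is within $2^{-n}$ of both $f(b'_n)$ and $h(b'_n)$, so the pair $(b'_n,a_n)$ genuinely tracks the interval. This is closer to the index-by-index pairing of approximations in the definition of S2a-reducibility.
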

\begin{proof}
The proof relies on the following lemma, whose proof is technical and is moved to the appendix.
\begin{lemma}\label{good-approximation-exists}
    If $(f,h)$ is a translation function interval from a c.a.\ real $\beta$ to another real $\alpha$, then there exists a computable approximation $b'_0,b'_1,\dots\to\beta$ such that $h(b'_i) - f(b'_i) < 2^{-i}$ for all $i$.
\end{lemma}

Let $(f,h)$ be a translation function interval from a c.a.\ real $\beta$ to another real $\alpha$, and let $b'_0,b'_1,\dots$ be a computable approximation of $\beta$ as guaranteed by Lemma~\ref{good-approximation-exists}, where $h(b'_i) - f(b'_i) < 2^{-i}$ for every $i$.

We construct a computable approximation of $\alpha$ by defining
\begin{equation}\label{eq:approximate-f}
    a_n = \tilde f(b'_n),
\end{equation}
where $\tilde f(b'_n)$ is the value of $f(b'_n)$ approximated from below with accuracy $2^{-n}$ (which is possible by simultaneously approximating $f(b'_n)$ from below and $h(b'_n)$ from above, since $h(b'_n) - f(b'_n) < 2^{-n}$). The sequence $a_0,a_1,\dots$ is infinite, computable, and has limit $\alpha$ since $\lim\limits_{x\to\beta}f(x) = \alpha$ by Proposition~\ref{proposition:limits}. Thus, $\alpha$ is computably approximable.
\end{proof}

Kumabe, Miyabe, and Suzuki demonstrated~\cite[Theorem~3.7]{Kumabe-etal-2023} in 2025 that S2a-reducibility can be equivalently characterized via the existence of a translation function interval consisting of two Lipschitz continuous functions.
\begin{theorem}[Kumabe et al.]\label{S2a-functional-criterion}
    A c.a.\ real $\alpha$ is S2a-reducible to a c.a.\ real $\beta$ iff there exists a translation function interval $(f,h)$ from $\beta$ to $\alpha$ such that $f$ and $h$ are Lipschitz continuous.
\end{theorem}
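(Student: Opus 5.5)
We prove the two implications separately. The direction from a Lipschitz translation function interval to S2a-reducibility uses Lemma~\ref{good-approximation-exists}. The converse builds the interval explicitly as a supremum and an infimum of uniformly computable Lipschitz "cone" functions around the points $(b_n,a_n)$. The main obstacle is ensuring $f\le h$ everywhere, not just at $\beta$.

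\emph{($\Leftarrow$)} Let $(f,h)$ be a translation function interval from $\beta$ to $\alpha$ with $f,h$ both $L$-Lipschitz. Lemma~\ref{good-approximation-exists} gives a computable approximation $b'_0,b'_1,\dots\to\beta$ with $h(b'_n)-f(b'_n)<2^{-n}$.
\begin{itemize}
\item As in the proof of Proposition~\ref{prop:ca-closed-downward}, we approximate $f(b'_n)$ from below and $h(b'_n)$ from above simultaneously. This computes a rational $a_n$ with $|a_n-f(b'_n)|\le 2^{-n}$, uniformly in $n$.
\item Since $f(\beta)=\alpha$ and $f$ is $L$-Lipschitz,
\[|\alpha-a_n|\le |f(\beta)-f(b'_n)|+2^{-n}\le L|\beta-b'_n|+2^{-n}.\]
\item Hence \eqref{eq:def-S2a} holds for the approximations $a_n$, $b'_n$ with $c=\max(L,1)+1$. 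Since $a_n\to\alpha$ by the same estimate, $\alpha\redsolovayzweia\beta$.
\end{itemize}

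\emph{($\Rightarrow$)} Let $a_n\to\alpha$, $b_n\to\beta$ be computable approximations and $c$ a constant with $|\alpha-a_n|<c(|\beta-b_n|+2^{-n})$. Define the uniformly computable, $c$-Lipschitz functions
\[l_n(x)=a_n-c(|x-b_n|+2^{-n}),\qquad u_n(x)=a_n+c(|x-b_n|+2^{-n}).\]
The naive choice $f=\sup_n l_n$, $h=\inf_n u_n$ gives the right values at $\beta$. However, it need not satisfy $f\le h$ away from $\beta$, and requirement~\eqref{def:first-item} is the step I expect to be the real issue. To fix this, put $F_n=\max_{k\le n}l_k$ and $H_n=\min_{k\le n}u_k$, and define
\[f(x)=\sup_n\min(F_n(x),H_n(x)),\qquad h(x)=\inf_n\max(F_n(x),H_n(x)).\]

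We then check the four requirements and the Lipschitz condition.
\begin{itemize}
\item \emph{$f\le h$.} Take any $n,m$. If $n\le m$, then $\min(F_n,H_n)\le F_n\le F_m\le\max(F_m,H_m)$. If $n>m$, then $\min(F_n,H_n)\le H_n\le H_m\le\max(F_m,H_m)$. Taking the supremum over $n$ and the infimum over $m$ gives $f\le h$.
\item \emph{Semicomputability.} Each $\min(F_n,H_n)$ and $\max(F_n,H_n)$ is computable uniformly in $n$. A supremum of such functions is lower semicomputable by Proposition~\ref{prop:semicomputable-properties}(i), and the infimum is dually upper semicomputable.
\item \emph{Values at $\beta$.} The hypothesis gives $l_k(\beta)<\alpha<u_k(\beta)$ for every $k$. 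Hence $F_n(\beta)<\alpha<H_n(\beta)$, so $\min(F_n(\beta),H_n(\beta))=F_n(\beta)$ and $\max(F_n(\beta),H_n(\beta))=H_n(\beta)$. Moreover $l_n(\beta)\to\alpha$ and $u_n(\beta)\to\alpha$, because $a_n\to\alpha$ and $b_n\to\beta$. Therefore $F_n(\beta)\nearrow\alpha$ and $H_n(\beta)\searrow\alpha$, which gives $f(\beta)=h(\beta)=\alpha$.
\item \emph{Finiteness and Lipschitz continuity.} Minima and maxima of $c$-Lipschitz functions are $c$-Lipschitz. So every $\min(F_n,H_n)$ lies in $[F_0(\beta)-c|x-\beta|,\ \alpha+c|x-\beta|]$. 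Hence $f$ is finite everywhere, and as a pointwise supremum of $c$-Lipschitz functions it is $c$-Lipschitz. The same argument applies to $h$.
\end{itemize}
Thus $(f,h)$ is a translation function interval from $\beta$ to $\alpha$ consisting of Lipschitz functions, which proves the theorem.
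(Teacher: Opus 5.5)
The paper does not prove this theorem; it is quoted from Kumabe, Miyabe, and Suzuki~\cite{Kumabe-etal-2023}, so there is no in-paper argument to compare with. Your proposal is correct and self-contained, and it uses only the tools developed in Section~2.

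\emph{($\Leftarrow$).} This direction is exactly the paper's own machinery: Lemma~\ref{good-approximation-exists} together with the simultaneous lower/upper approximation step from the proof of Proposition~\ref{prop:ca-closed-downward}. The Lipschitz bound $|f(\beta)-f(b'_n)|\le L|\beta-b'_n|$ is what upgrades the mere convergence $a_n\to\alpha$ obtained there to the S2a inequality~\eqref{eq:def-S2a}.

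\emph{($\Rightarrow$).} Your interleaved construction is the essential idea, and you identified the obstacle correctly. The naive pair $\sup_n l_n$, $\inf_m u_m$ satisfies $f\le h$ everywhere iff $|a_n-a_m|\le c(|b_n-b_m|+2^{-n}+2^{-m})$ for all $n,m$. The S2a hypothesis does not give this: take $\beta=\alpha=0$, $c=1$, $b_1=b_2=1$, $a_1=0.9$, $a_2=-0.9$; then $l_1(1)=0.4>-0.65=u_2(1)$. Your running maximum and minimum avoid this. The monotonicity $F_n\le F_m$ and $H_n\ge H_m$ for $n\le m$ yields $\min(F_n,H_n)\le\max(F_m,H_m)$ for all $n,m$ with no consistency condition on the approximations. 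At $\beta$ the min and max collapse to $F_n(\beta)$ and $H_n(\beta)$ because $F_n(\beta)<\alpha<H_n(\beta)$, so the values at $\beta$ are unaffected. Your finiteness bound is also needed, since it is what makes the supremum and infimum real-valued and hence both semicomputable and $c$-Lipschitz.

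One small detail should be stated: take $c$ rational, enlarging it if necessary. Then $l_n$, $u_n$, and hence $\min(F_n,H_n)$ and $\max(F_n,H_n)$, are computable uniformly in $n$, which your semicomputability step relies on.
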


In the context of randomness, by Remark~\ref{bounded-variation-is-crucial}, it makes sense to consider only translation function intervals $(f,h)$ where both $f$ and $h$ have bounded variation. In particular, by Proposition~\ref{bounded-variation-is-implied-automatically}, nondecreasing or Lipschitz continuous functions automatically have bounded variation.

\medskip

In 2025, Titov conjectured that the exclusion of infinite oscillation, which is the second logical clause of Theorem~\ref{Titov-2024}, also holds for function intervals.
\begin{conjecture}[Titov, {\cite[Conjecture~3.2]{Titov-2024}}]\label{Titov-Conjecture}
    Let $\alpha$ be a c.a.\ real and $\beta$ be a Martin-Löf random c.a.\ real that fulfills $\alpha\redsolovayzweia\beta$ via a function interval $(f,h)$. Then there exists a constant $d$ such that $f'(\beta) = h'(\beta) = d$, where $d$ does not depend on the choice of the function interval witnessing the reducibility $\alpha\redsolovayzweia\beta$. Moreover, $d=0$ if and only if $\alpha$ is not Martin-Löf random.
\end{conjecture}

In the next section, we prove that the conjecture is incorrect. In fact, both interdictions of unbounded growth and of infinite oscillation, which are the two clauses of the Barmpalias--Lewis-Pye Limit Theorem, fail for function intervals.

\section{The Barmpalias--Lewis-Pye Limit Theorem does not hold for S2a-reducibility}\label{section:main-result}

In this section, we construct a Martin-Löf random real $\beta$ and two function intervals $(-h,h)$ from $\beta$ to $0$ and $(\tilde f, \tilde h)$ from $\beta$ to $\beta$ such that:
\begin{enumerate}[(i)]
    \item $h$ is Lipschitz continuous with Lipschitz constant $L=1$, which (since $-h$ is also Lipschitz continuous with $L=1$) implies that $0\redsolovayzweia\beta$; the functions $\tilde f$ and $\tilde h$ are nondecreasing;
    \item the ratio $\frac{|0 - h(x)|}{|\beta - x|}$ is bounded but oscillates infinitely as $x\nearrow\beta$ and as $x\searrow\beta$, and the same holds for the ratio $\frac{|0 - (-h(x))|}{|\beta - x|}$;
    \item the ratios $\frac{|\beta - \tilde f(x)|}{|\beta - x|}$ and $\frac{|\beta - \tilde h(x)|}{|\beta - x|}$ are unbounded as $x\to\beta$.
\end{enumerate}
In particular, by Proposition~\ref{bounded-variation-is-implied-automatically}, the functions $\tilde f$ and $\tilde h$ have bounded variation, and the function interval $(-h,h)$ refutes Conjecture~\ref{Titov-Conjecture} because $\beta$ is Martin-Löf random and $0\redsolovayzweia\beta$, but the ratio $\frac{|0 - h(x)|}{|\beta - x|}$ exhibits infinite oscillation as $x\to\beta$ on both sides.

To simplify the presentation, for a function $g$ and a real $y$, we denote by $D^g_{y}(x)$ the ratio $\frac{g(y) - g(x)}{y - x}$ for a real $x\neq y$. In particular, $|D^g_{y}(x)| = \frac{|g(y) - g(x)|}{|y - x|}$.

\begin{theorem}\label{BLP-negation}
    There exist a c.a.\ Martin-Löf random real $\beta$, a function interval $(-h,h)$ from $\beta$ to $0$, and a function interval $(\tilde f,\tilde h)$ from $\beta$ to $\beta$ such that
    \begin{align*}
        &h\text{ is Lipschitz continuous, }\tilde f\text{ and }\tilde h\text{ are nondecreasing},\\
        &\liminf_{x\nearrow\beta}|D^h_{\beta}(x)| = \liminf_{x\searrow\beta}|D^h_{\beta}(x)| = 0 < 1 = \limsup_{x\nearrow\beta}|D^h_{\beta}(x)| = \limsup_{x\searrow\beta}|D^h_{\beta}(x)|,\\
        &\limsup_{x\nearrow\beta}|D^{\tilde f}_{\beta}(x)| = \infty = \limsup_{x\searrow\beta}|D^{\tilde h}_{\beta}(x)|.
    \end{align*}
\end{theorem}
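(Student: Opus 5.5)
The plan is to take one Martin-Löf random c.a.\ real $\beta$ together with a carefully chosen computable approximation $b_0,b_1,\dots\to\beta$, and to build all three functions from the point set $B=\{b_n:n\in\mathbb{N}\}$. The basic device is
\[h(x)=\inf_n\bigl(|x-b_n|+\varepsilon_n\bigr),\]
for a computable sequence of rationals $\varepsilon_n\ge 0$ with $\varepsilon_n\to 0$; in the simplest case $\varepsilon_n=0$, so $h(x)=\mathrm{dist}(x,B)$. This $h$ has the required structural properties:
\begin{enumerate}[(a)]
\item It is an infimum of a uniformly computable family of $1$-Lipschitz functions, so it is $1$-Lipschitz and upper semicomputable, since $\{x:h(x)<q\}$ is c.e.\ uniformly in $q$.
\item Consequently $-h$ is lower semicomputable, and $-h\le h$ because $h\ge 0$.
\item We have $h(\beta)\le|\beta-b_n|+\varepsilon_n\to 0$, so $h(\beta)=0$.
\end{enumerate}
Hence $(-h,h)$ is a function interval from $\beta$ to $0$. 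Moreover $|D^h_\beta(x)|=h(x)/|x-\beta|\le 1$ always, since $h$ is $1$-Lipschitz and $h(\beta)=0$.

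The oscillation clause then reduces to a geometric property of $B$ near $\beta$ on each side. For the liminf, take $x=b_n$: then $h(b_n)=0$, so $|D^h_\beta(b_n)|=0$. Thus we need $b_n<\beta$ and $b_n>\beta$ each for infinitely many $n$ with $b_n\neq\beta$. For the limsup equal to $1$, we need infinitely many \emph{gaps} on each side. A gap means two points of $B$ on the same side at distances $d_1\gg d_2$ from $\beta$, with no points of $B$ in between. Choose $x$ on that side with $|x-\beta|=t$ and $d_2\ll t\ll d_1$. Then
\[h(x)\ge\min(t-d_2,\,d_1-t)\approx t,\]
so the ratio is close to $1$. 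It suffices to have, on each side, infinitely many pairs with $d_2/d_1\to 0$ and no points of $B$ in between.

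For the nondecreasing interval $(\tilde f,\tilde h)$ from $\beta$ to $\beta$, I would use step-like functions attached to the same points. Put $\tilde h(x)=\inf_{n:\,b_n>x}(b_n+r_n)$, which is nondecreasing. It is upper semicomputable because $\{x:\tilde h(x)<q\}$ is an enumerable union of open sets $\{x<b_n\}$ over $n$ with $b_n+r_n<q$. Dually, put $\tilde f(x)=\sup_{n:\,b_n<x}(b_n-r_n)$. The rationals $r_n\ge0$ shrink fast enough that $\tilde f(\beta)=\tilde h(\beta)=\beta$ and $\tilde f\le\tilde h$. Unbounded slope comes from placing points very close to $\beta$ on the appropriate side. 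For instance, just left of a point $b_n>\beta$ with $b_n-\beta=\delta$, the function $\tilde h$ already jumps to about $b_n+r_n$. Choosing $r_n$ relatively large compared to $\delta$, but still $r_n\to 0$, gives $|D^{\tilde h}_\beta(x)|\gtrsim r_n/\delta\to\infty$ for suitable $x\searrow\beta$, and symmetrically for $\tilde f$ as $x\nearrow\beta$.

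The main obstacle is producing a random $\beta$ with an approximation whose points lie on both sides, come arbitrarily close, and leave arbitrarily sparse gaps. We cannot compute $|\beta-b_n|$, so thinning a given approximation such as that of $\Omega$ is not effective, and a Martin-Löf random d.c.e.\ real is one-sided. I would therefore construct $\beta$ directly as a $\Delta^0_2$ real by a finite-injury construction inside the $\Pi^0_1$ class $P$, the complement of the first level of a universal Martin-Löf test (Theorem~\ref{thm:universal-test}). At each stage we keep a rational interval $I_s$ with $\mu(I_s\cap P)>0$ as currently estimated. We put new points $b_s$ alternately to the left and to the right of a subinterval $J\subset I_s$ that we intend to keep, at a distance much larger than $|J|$, and then shrink to $J$. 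When the enumeration of the complement of $P$ shows that $J\cap P$ is empty, we back up to a larger interval and retry elsewhere. Each level is injured only finitely often by a measure argument, so the limit $\beta$ lies in $P$ and is random, and $b_s\to\beta$ is a computable approximation, so $\beta$ is c.a. The delicate step, which I expect to be the bulk of the proof, is showing that the points added during injured attempts do not destroy the sparse gaps. I would handle this by choosing each new $J$ and its points at scales much smaller than all previously placed points, so that any stale points stay far from $\beta$ relative to the later gaps.
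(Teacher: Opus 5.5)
The $h$ part is exactly the paper's: $h=\mathrm{dist}(\cdot,B)$, the $\liminf$ taken along the $b_n$, and the $\limsup$ taken at the midpoints of gaps. You also correctly identify what the approximation must provide, namely Lemma~\ref{good-real-and-approximation-exist}. The gap is that this lemma is the real content of the theorem, and your sketch leaves its decisive point open. Working ``at scales much smaller than all previously placed points'' does not control stale points. Whether a point left by an abandoned attempt lands in the final gap of its level depends on where the final $J$ is placed relative to it, not on scales. Since every level may have abandoned attempts, this has to be settled at every level. If instead you forbid retries near stale points, you must show that the constrained retries still eventually capture a point of $P$. That is exactly the finite-injury claim you leave to an unspecified ``measure argument'', and the candidate family behind ``retry elsewhere'' is never fixed.

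The paper's missing idea is an ordering. Partition the current interval $I_{k_i}$ once and for all into pieces shrinking geometrically towards its endpoint $b_{k_i}$, which is an already placed point; this endpoint alternates sides from level to level. Always select the first piece in which $U^{(t)}$ has relative measure at most $\tfrac{1}{2}$. Put $b_{t+1}$ at the far uncovered end of that piece, and take as the gap the whole stretch between $b_{k_i}$ and the piece. Because the part of the piece beyond $b_{t+1}$ is already covered, a failing child makes its whole piece fail. So retries move monotonically towards $b_{k_i}$, and stale points always lie in earlier pieces on the far side of the current one, never in its gap. Infinitely many moves would push the parent's relative measure above $\tfrac{1}{2}$, which gives both finite injury and $\beta\notin U$. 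The piece-to-gap ratio is $\tfrac{1}{i+1}\to 0$.

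Your argument for $(\tilde f,\tilde h)$ is also unjustified as stated. For $x$ just left of $b_n=\beta+\delta$, the value $\tilde h(x)=\inf_{b_m>x}(b_m+r_m)$ is only bounded \emph{above} by $b_n+r_n$. Any $b_m\in(b_n,\beta+r_n)$ with a small offset pulls it down. You would therefore need a point-free stretch of length $\gg\delta$ \emph{beyond} $b_n$, and that separation condition is not among the properties you arrange. Moreover, $r_n$ can be compared with the uncomputable $\delta$ only through the construction's planned scales.

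Large slopes come from gaps, not from points close to $\beta$, and the offsets are superfluous. Take $r_n=0$, i.e.\ $\tilde h(x)=\inf\{b_m:b_m>x\}$ and $\tilde f(x)=\sup\{b_m:b_m<x\}$, and evaluate at the near end of a gap. If no $b_m$ lies in $(\beta+d_2,\beta+d_1)$, then $\tilde h(\beta+d_2)\ge\beta+d_1$, so the slope there is at least $d_1/d_2\to\infty$. Symmetrically, the paper uses $\tilde f(r_i)=\tilde f(l_i)\le l_i$ together with $\beta-r_i\le\tfrac{1}{k}(r_i-l_i)$. The same gaps that give $\limsup|D^h_\beta|=1$ thus also give the unbounded ratios.
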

\begin{proof}
We prove the theorem by constructing a computable approximation that converges to a Martin-Löf random real $\beta$ and has no elements on intervals arbitrarily close to $\beta$ relative to the interval length. We formalize this in the following lemma.
\begin{lemma}\label{good-real-and-approximation-exist}
    There exists a two-sided computable approximation $b_0,b_1,\dots$ such that $\beta = \lim_{n\to\infty}b_n$ is Martin-Löf random and there exists a sequence of intervals $[l_0,r_0],[l_1,r_1],\dots$ satisfying
    \begin{align}
        &\lim_{i\to\infty}(r_i - l_i) = 0,\label{eq:lemma-i-shrinking}\\
        &\forall k\in\mathbb{N}\ \exists i\ \big(\beta\in[r_i, r_i + \tfrac{1}{k}(r_i-l_i)]\text{ and }b_n\notin(l_i,r_i)\text{ for all }n\big),\label{eq:lemma-ii-right}\\
        &\forall k\in\mathbb{N}\ \exists i\ \big(\beta\in[l_i - \tfrac{1}{k}(r_i-l_i), l_i]\text{ and }b_n\notin(l_i,r_i)\text{ for all }n\big).\label{eq:lemma-ii-left}
    \end{align}
\end{lemma}

\begin{proof}
The idea of the proof is the step-wise construction of a computable approximation $b_0,b_1,\ldots$ satisfying~\eqref{eq:lemma-i-shrinking}--\eqref{eq:lemma-ii-left} for an appropriate sequence of intervals $[l_0,r_0],[l_1,r_1],\dots$ such that the limit point $\beta = \lim_{n\to\infty}b_n$ has the following property: for the \textit{first layer} $U$ of a universal Martin-Löf test (see Theorem~\ref{thm:universal-test}), which is a c.e.\ open set $U=\bigcup_i U_i$ in $[0,1]$ with Lebesgue measure $\mu(U)\leq\tfrac{1}{2}$, we have $\beta\notin U$. This implies the Martin-Löf randomness of $\beta$ since every Martin-Löf nonrandom real is contained in all layers of a universal Martin-Löf test.

For an interval $A$, we denote by $l(A)$, $r(A)$, and $\overline{A}$ the left endpoint, right endpoint, and closure of $A$, respectively.
\paragraph*{Construction of the computable approximation of $\beta$.}
At step $0$, we set the first element of the approximation $b_0 = 0$ and two open intervals $I_0 = (0,1)$ and $J_0 = (1,2)$ and let $\xi(0) = (0)$ be the \defhigh{index chain} of step $0$.

At the beginning of step $t+1$, where $t\geq 0$, we denote $U^{(t)} = \bigcup_{i=0}^t U_i$ and fix the index chain $\xi(t) = (k_0,\dots,k_m)$ of step $t$. We assume the following list of properties of step $t$ and its \defhigh{index chain} as induction hypothesis:
\begin{align}
&k_0 = 0 < k_1 < \dots < k_m = t,\label{eq:induction:index-chain}\\
&b_t\in \overline{I_{k_m}}\subseteq \overline{I_{k_{m-1}}}\subseteq \dots \subseteq \overline{I_{k_0}}\text{ and }\overline{J_{k_i}}\subseteq \overline{I_{k_{i-1}}}\text{ for all }i\in\{1,\dots,m\},\label{eq:induction:nested-intervals}\\
&|I_{k_i}|\leq \tfrac{1}{i}|J_{k_i}|\leq \tfrac{1}{i+1}|I_{k_{i-1}}|\text{ for every }i\in\{1,\dots,m\},\label{eq:induction:|I|=|J|}\\
&\begin{cases}I_{k_i}\subseteq[r(J_{k_i}), r(J_{k_i}) + \tfrac{1}{i}|J_{k_i}|]&\text{if }i\text{ is odd}\\
I_{k_i}\subseteq[l(J_{k_i}) - \tfrac{1}{i}|J_{k_i}|, l(J_{k_i})]&\text{if }i\text{ is even}\end{cases}\quad\text{for every }i\in\{1,\dots,m\},\label{eq:induction:left-and-right}\\
&\frac{\mu(U^{(t)}\cap I_{k_i})}{\mu(I_{k_i})}\leq \tfrac{1}{2}\text{ for every }i\in\{0,\dots,m\},\label{eq:induction:measure-bound}\\
&\nexists t'<t : \xi(t') = (k_0,\dots,k_{i-1}, k'_i,\dots,t')\text{ where }k'_i>k_i\text{ and }1\leq i\leq m,\label{eq:induction:no-greater}\\
&I_t\cap I_{t'} = \emptyset\text{ and }J_t\cap I_{t'} = \emptyset\text{ for all }t'\in\{0,1,\dots,t-1\}\setminus\xi(t),\label{eq:induction:disjoint-intervals}\\
&b_n\notin J_{k_i}\text{ for every }n\leq t\text{ and }i\in\{0,\dots,m\}.\label{eq:induction:J-is-empty}
\end{align}

Note that, for step $0$, all properties \eqref{eq:induction:index-chain} through \eqref{eq:induction:J-is-empty} hold. 

Let $U_{t+1}$ be a new interval enumerated into $U$. Set
\[U^{(t+1)} = U^{(t)}\cup U_{t+1} = \bigcup_{i=0}^{t+1}U_i.\]

First, we fix the greatest second order index $i$ in the range $0,\dots,m$ such that
\begin{equation}\label{eq:fix-index-i}
    \frac{\mu(U^{(t+1)}\cap I_{k_i})}{\mu(I_{k_i})}\leq \tfrac{1}{2},
\end{equation}
define the index chain of $t+1$ as
\[\xi(t+1) = (k_0,\dots,k_i,t+1),\]
and consider the infinite disjoint partition of $I_{k_i}$ as
\begin{equation}\label{eq:partition-into-disjoint-union}
    I_{k_i} = I^{(1)}_{k_i}\cupdisjoint I^{(2)}_{k_i}\cupdisjoint\dots
\end{equation}
where
\[I^{(s)}_{k_i} = \begin{cases}\big[b_{k_i} + (1 - \tfrac{1}{i+2})^s|I_{k_i}|, b_{k_i} + (1 - \tfrac{1}{i+2})^{s-1}|I_{k_i}|\big]&\text{for all even }i,\\
\big[b_{k_i} - (1 - \tfrac{1}{i+2})^{s-1}|I_{k_i}|, b_{k_i} - (1 - \tfrac{1}{i+2})^s|I_{k_i}|\big]&\text{for all odd }i,\end{cases}\]
for all $s\geq 1$.
Since $\mu(I_{k_i}\cap U^{(t+1)})\leq \tfrac{1}{2}\mu(I_{k_i})$ by the choice of $i$, we can fix the minimal index $s\geq 1$ such that
\begin{equation}\label{eq:fix-index-s}
    \mu(I^{(s)}_{k_i}\cap U^{(t+1)})\leq \tfrac{1}{2}\mu(I^{(s)}_{k_i}),
\end{equation}
and let $b_{t+1}$ be the most distant point in $\overline{I^{(s)}_{k_i}}$ from $b_{k_i}$ that is not covered by $U^{(t+1)}$ (so, for instance, $b_{t+1} = r(I^{(s)}_{k_i})$ in case $i$ is even and $r(I^{(s)}_{k_i})\notin U^{(t+1)}$, and $b_{t+1} = l(I^{(s)}_{k_i})$ in case $i$ is odd and $l(I^{(s)}_{k_i})\notin U^{(t+1)}$). Such a point exists by the choice of $i$. Finally, we set the (open) intervals
\[
\begin{cases}I_{t+1} = (l(I^{(s)}_{k_i}), b_{t+1}),\ J_{t+1} = (b_{t}, l(I_{t+1}))&\text{if }i\text{ is even},\\
I_{t+1} = (b_{t+1}, r(I^{(s)}_{k_i})),\ J_{t+1} = (r(I_{t+1}), b_{t})&\text{if }i\text{ is odd}.\end{cases}
\]
\paragraph*{Verification (sketch).}
The sequence $(b_n)_{n\in\mathbb{N}}$ converges since, for every level $i$, all $b_t$ with sufficiently large $t$ have an index chain starting from the same $i+1$ indices $(k_0,\ldots,k_i)$ (we call indices with this property \textit{stable}), and thus lie in the same interval $I_{k_i}\subseteq\cdots\subseteq I_{k_0}$ of length at most $2^{-i}$. Its limit $\beta$ does not lie in $U$ since, for every enumeration step $k$, all $b_t$ with $t>k$ (and thus also $\beta$, by a compactness argument) do not lie in $U^{(k)}$. Hence $\beta$ is Martin-Löf random.

Setting $[l_i,r_i]=\overline{J_i}$, we obtain~\eqref{eq:lemma-ii-right} and~\eqref{eq:lemma-ii-left} witnessed by the stable indices $(k_2,k_4,k_6,\ldots)$ and $(k_1,k_3,k_5,\ldots)$, respectively, since for every $i$ all $b_t$ with $t>i$ (and thus also $\beta$) lie in $\overline{I_{k_i}}$, while no $b_t$ lies in $J_{k_i}$, and $|I_{k_i}|<\tfrac{1}{i}|J_{k_i}|$.
The detailed verification is moved to the appendix.
\end{proof}

\smallskip
Let $\beta$ be a Martin-Löf random real with computable approximation $b_0,b_1,\dots$ and sequence of intervals $[l_0,r_0],[l_1,r_1],\dots$ as in Lemma~\ref{good-real-and-approximation-exist}.
We construct the function interval $(-h,h)$ witnessing $0\redsolovayzweia\beta$ by setting
\begin{equation}\label{eq:def-h}
    h(x) = \inf\{|x - b_i| : i\in\mathbb{N}\}.
\end{equation}

The function $h$ is Lipschitz continuous with constant $1$. It is upper semicomputable via a Turing functional that, given a Cauchy name $(q_0,q_1,\dots)$, returns the nonincreasing sequence $(p_0,p_1,\dots)$ with
\[p_n = \min\{|q_n - b_m| : 0\leq m\leq n\} + 2^{-n}.\]
By construction, $h(b_i) = 0$ for every $i$, and $h(\beta) = 0$ since $\lim_{i\to\infty}|\beta - b_i| = 0$. Since $b_0,b_1,\dots$ is a two-sided approximation of $\beta$, there exists a left-sided subsequence $b_{i_0},b_{i_1},\dots$, giving
\[\liminf_{x\nearrow\beta}\frac{|0 - h(x)|}{|\beta - x|}\leq \lim_{n\to\infty}\frac{|0 - h(b_{i_n})|}{|\beta - b_{i_n}|} = 0.\]

On the other hand, if $x_i = \frac{l_i + r_i}{2}$ for some $i$ satisfying~\eqref{eq:lemma-ii-right}, then by~\eqref{eq:def-h} we have $h(x_i)\geq \frac{r_i - l_i}{2}$ and $|\beta - x_i|\leq \frac{r_i - l_i}{2} + \frac{r_i - l_i}{k}$, where $k$ witnesses Lemma~\ref{good-real-and-approximation-exist} for this $i$. Therefore
\[\frac{|0 - h(x_i)|}{|\beta - x_i|}\geq \frac{(r_i - l_i)/2}{(r_i - l_i)/2 + (r_i - l_i)/k}.\]
Fix indices $j_0,j_1,\dots$ such that, for each $n\geq 1$, $j_n$ satisfies~\eqref{eq:lemma-ii-right} with $k = n$, and set $x_n = \frac{l_{j_n} + r_{j_n}}{2}$. Then $x_n < \beta$ and $x_n\nearrow\beta$, hence

\[\limsup_{x\nearrow\beta}\frac{|0 - h(x)|}{|\beta - x|}\geq \limsup_{n\to\infty}\frac{|0-h(x_n)|}{|\beta - x_n|} \geq \limsup_{n\to\infty}\frac{\frac{r_{j_n} - l_{j_n}}{2}}{\frac{r_{j_n} - l_{j_n}}{2} + \frac{r_{j_n} - l_{j_n}}{n}} = 1.\]


On the other hand, the Lipschitz continuity of $h$ with Lipschitz constant $1$ implies that $\frac{|h(\beta) - h(x)|}{|\beta - x|}\leq 1$ for every $x$; therefore $\limsup_{x\nearrow\beta}\frac{|0 - h(x)|}{|\beta - x|} = 1$.

The equalities $\liminf_{x\searrow\beta}\frac{|0 - h(x)|}{|\beta - x|} = 0$ and $\limsup_{x\searrow\beta}\frac{|0 - h(x)|}{|\beta - x|} = 1$ are obtained analogously from~\eqref{eq:lemma-ii-left}.

The function $-h$ is Lipschitz continuous, lower semicomputable (since $h$ is upper semicomputable), and satisfies $-h(\beta) = 0$; thus we have $0\redsolovayzweia\beta$ via the function interval $(-h,h)$. Note that $-h$ has the same limit superior and limit inferior for the ratio $\frac{|0 - (-h(x))|}{|\beta - x|}$ for $x\nearrow\beta$ and $x\searrow\beta$ as $h$, since $|0 - (-h(x))| = |h(x)| = |0 - h(x)|$.

\smallskip
We now construct the function interval $(\tilde f,\tilde h)$. Let $b_0,b_1,\dots$ be the two-sided computable approximation of the Martin-Löf random real $\beta$ as in Lemma~\ref{good-real-and-approximation-exist}.

The functions $\tilde f$ and $\tilde h$ are defined via their hypograph $U_{\tilde f} = \{(x,y) : \tilde f(x) > y\}$ and epigraph $U_{\tilde h} = \{(x,y) : \tilde h(x) < y\}$, respectively, where $U_{\tilde f} = \bigcup_{i\in\mathbb{N}}U^{(i)}_{\tilde f}$ and $U_{\tilde h} = \bigcup_{i\in\mathbb{N}}U^{(i)}_{\tilde h}$. We construct the sets $U^{(i)}_{\tilde f}$ and $U^{(i)}_{\tilde h}$ stage by stage as follows:
\begin{align*}
U^{(0)}_{\tilde f} &= \{(x,0) : x\in(0,1)\}, & U^{(i)}_{\tilde f} &= U^{(i-1)}_{\tilde f}\cup\{(x,y) : x>b_i\wedge y < b_i\} & (i>0),\\
U^{(0)}_{\tilde h} &= \{(x,1) : x\in(0,1)\}, & U^{(i)}_{\tilde h} &= U^{(i-1)}_{\tilde h}\cup\{(x,y) : x<b_i\wedge y > b_i\} & (i>0).
\end{align*}
By construction, $\tilde f$ and $\tilde h$ are nondecreasing. The function $\tilde f$ is lower semicomputable because its hypograph $\{(x,y) : y < \tilde f(x)\}$ equals the c.e.\ union $U_{\tilde f}$. The function $\tilde h$ is upper semicomputable because its epigraph $\{(x,y) : y > \tilde h(x)\}$ equals the c.e.\ union $U_{\tilde h}$.

We now show that $\tilde f(\beta) = \tilde h(\beta) = \beta$. Consider the strictly increasing subsequence $b^{(l)}_0,b^{(l)}_1,\dots$ of all $b_i$ less than $\beta$ and the strictly decreasing subsequence $b^{(r)}_0,b^{(r)}_1,\dots$ of all $b_i$ greater than $\beta$. Since $\tilde f(b^{(l)}_i) = b^{(l)}_{i-1}$ and $\tilde f(b^{(r)}_i) = b^{(r)}_{i+1}$,
\begin{align*}
\lim_{n\to\infty}\tilde f(b^{(l)}_n) &= \lim_{n\to\infty}b^{(l)}_{n-1} = \beta,\\
\lim_{n\to\infty}\tilde f(b^{(r)}_n) &= \lim_{n\to\infty}b^{(r)}_{n+1} = \beta.
\end{align*}
By monotonicity of $\tilde f$ and $\lim_{n\to\infty}\tilde f(b^{(l)}_n) = \lim_{n\to\infty}\tilde f(b^{(r)}_n) = \beta$, the latter implies $\tilde f(\beta) = \beta$. Similarly, $\tilde h(\beta) = \beta$.

We now show that
\[\limsup_{x\nearrow\beta}|D^{\tilde f}_{\beta}(x)| = \infty\quad\text{and}\quad\limsup_{x\searrow\beta}|D^{\tilde h}_{\beta}(x)| = \infty.\]
We give the explicit proof for the left equality; the right one is obtained analogously. By Lemma~\ref{good-real-and-approximation-exist}~\eqref{eq:lemma-ii-right}, for any $k$ there exists an interval $(l_i,r_i)$ such that $\beta\in[r_i, r_i + \tfrac{1}{k}(r_i - l_i)]$. Since no point $b_n$ lies in $(l_i,r_i)$, we have $\tilde f(r_i) = \tilde f(l_i)\leq l_i$, where the last inequality follows from $\tilde f(x)\leq x$. Considering the sequence $r_{j_n}$ converging to $\beta$ from the left such that, for each $n\geq 1$, $j_n$ satisfies~\eqref{eq:lemma-ii-right} with $k=n$, we obtain
\begin{align*}
\limsup_{x\nearrow\beta}|D^{\tilde f}_{\beta}(x)| &\geq \limsup_{n\to\infty}|D^{\tilde f}_{\beta}(r_{j_n})| = \limsup_{n\to\infty}\frac{\beta - \tilde f(r_{j_n})}{\beta - r_{j_n}}\\
&\geq \limsup_{n\to\infty}\frac{\beta - l_{j_n}}{\beta - r_{j_n}}\geq \limsup_{n\to\infty}\frac{r_{j_n} - l_{j_n}}{\frac{1}{n}(r_{j_n} - l_{j_n})} = \infty,
\end{align*}
using $\beta - r_{j_n}\leq \frac{1}{n}(r_{j_n} - l_{j_n})$ and $\beta\geq r_{j_n}$.
\end{proof}


\begin{remark}\label{remark:no-speedability}
    The existence of a function interval $(\tilde f, \tilde h)$ from a Martin-Löf random real $\beta$ to itself constructed in the proof of Theorem~\ref{BLP-negation} excludes the introduction of any nontrivial speedability notion in terms of the functional characterization of S2a-reducibility (i.e.\ via a function interval).
\end{remark}

\section{Conclusion and future work}\label{section:future-work}

In this paper, we refuted Conjecture~\ref{Titov-Conjecture}, originally stated by Titov~\cite[Conjecture~3.2]{Titov-2024}: we constructed a c.a.\ Martin-Löf random real~$\beta$ together with a function interval $(-h,h)$ from~$\beta$ to~$0$ consisting of two Lipschitz continuous functions such that the relevant approximation ratio $\frac{|0-h(x)|}{|\beta-x|}$ oscillates between $0$ and~$1$ as $x\to\beta$ on both sides. We also constructed a function interval $(\tilde f,\tilde h)$ from $\beta$ to itself consisting of two nondecreasing functions whose corresponding ratios are unbounded, which (as observed in Remark~\ref{remark:no-speedability}) precludes any nontrivial notion of speedability for the functional characterization of S2a-reducibility.

Conceptually, in contrast to the case of computable functions on rationals or on reals, we cannot construct a randomness test failed by a given real from a semicomputable function that grows unboundedly or oscillates infinitely in a neighbourhood of that real.

Several questions remain open.
First, it is unclear for which pairs of reals $(\alpha,\beta)$ there exists a translation function interval from $\beta$ to $\alpha$.
Second, although the analogue of the Barmpalias--Lewis-Pye Limit Theorem fails for S2a-reducibility, it may still be the case that a weaker form holds: for any c.a.\ reals $\alpha,\beta$ with $\alpha\redsolovayzweia\beta$ and $\beta$ Martin-Löf random, does there always \emph{exist} a translation function interval $(f,h)$ from~$\beta$ to~$\alpha$ such that both $f$ and $h$ are at least one-sided differentiable at~$\beta$?
Third, it remains unclear whether Martin-Löf randomness can be \emph{characterized} via differentiability properties of translation functions or function intervals on the c.a.\ reals.
Fourth, beyond Martin-Löf randomness, Theorem~\ref{Titov-2024} suggests that connections analogous to those between Solovay reducibility and randomness may exist between other versions of Solovay reducibility and stronger notions of randomness; this is a promising direction for further investigation.

\bibliography{lipics-v2021-sample-article}

\newpage

\appendix

\section*{Appendix A.\ Proof of Lemma~\ref{good-approximation-exists}}\label{appendix-A}

\begin{proof}
Let $(f,h)$ be a translation function interval from a c.a.\ real $\beta$ to another real $\alpha$, let $b_0,b_1,\dots$ be a computable approximation of $\beta$, and define $g = h - f$.

By Proposition~\ref{prop:g-upper-semicomputable}, the function $g$ fulfills $\lim\limits_{x\to\beta}g(x) = 0$, hence, for every $\varepsilon>0$, there exists $\delta>0$ such that
\begin{equation}\label{eq:g-bound}
g(x)\downarrow\in[0,\varepsilon)\text{ for every }x\in(\beta - \delta, \beta + \delta).
\end{equation}

We define the index sequence $i_0,i_1,\dots$ and a sequence of rationals $b'_0,b'_1,\dots$ as follows: starting from $i_0 = 0$ and $b'_0 = b_{i_0} = b_0$, at each step $n$ we search for a rational $q$ and an index $i>i_{n-1}$ such that
\[q\in[b_i - 2^{-n}, b_i + 2^{-n}]\text{ and }g(q)\downarrow < 2^{-n},\]
where $g(q)\downarrow < 2^{-n}$ means that, for the Cauchy name $(q,q,q,\dots)$ of $q$, the Turing functional that approximates $g(q)$ from above outputs a decreasing sequence with some element smaller than $2^{-n}$, and we set $i_n = i$ and $b'_n = q$.

Each step terminates because, for all sufficiently large $i$, such a rational $q$ exists by~\eqref{eq:g-bound}. Therefore, the sequence $b'_0,b'_1,\dots$ is infinite and converges to $\beta$ since $\lim_{n\to\infty}b_n = \beta$ and $\lim_{n\to\infty}(b_n - b'_n) = 0$. Finally, for every $n$, $h(b'_n) - f(b'_n) = g(b'_n) < 2^{-n}$, which concludes the proof.\qed
\end{proof}

\section*{Appendix B.\ Verification of the construction in Lemma~\ref{good-real-and-approximation-exist}}\label{appendix-B}

\paragraph*{Properties of construction step $t+1$.}
Properties \eqref{eq:induction:index-chain} through \eqref{eq:induction:no-greater} for step $t+1$ and its index chain $(k_0,\dots,k_i,t+1)$ follow directly from the construction and from properties \eqref{eq:induction:index-chain} through \eqref{eq:induction:no-greater} for step $t$.

To prove property \eqref{eq:induction:disjoint-intervals} for step $t+1$, we fix a step $t'\in\{0,1,\dots,t+1\}\setminus\{k_0,\dots,k_i,t+1\}$.
\begin{itemize}
    \item If its index chain has the form $\xi(t') = (k_0,\dots,k_{i-l},k'_{i-l+1},\dots,t')$ where $k_{i-l}\neq k'_{i-l}$ for some $l>0$, then by~\eqref{eq:induction:no-greater} for step $t+1$, we have $k_{i-l}>k'_{i-l}$. From $k_{i-l}>k'_{i-l}$, we obtain by~\eqref{eq:induction:disjoint-intervals} for step $k_{i-l}$ that $I_{k_{i-l}}\cap I_{k'_{i-l}} = \emptyset$, hence by iterative application of~\eqref{eq:induction:nested-intervals},
    \[I_{t'}\subseteq I_{k'_{i-l}},\ I_{k'_{i-l}}\cap I_{k_{i-l}} = \emptyset,\ I_{t+1}\subseteq I_{k_i}\subseteq I_{k_{i-l}},\text{ and }J_{t+1}\subseteq I_{k_i}\subseteq I_{k_{i-l}},\]
    which implies $I_{t'}\cap I_{t+1} = \emptyset$ and $I_{t'}\cap J_{t+1} = \emptyset$.

    \item If the index chain of $t'$ has the form $\xi(t') = (k_0,\dots,k_i,k'_{i+1},\dots,t')$ where $k'_{i+1}\neq k_{i+1}$, then $k'_{i+1}>k_{i+1}$ would contradict~\eqref{eq:induction:no-greater}, so $k'_{i+1}<k_{i+1}$. At steps $k'_{i+1}$ and $t+1$, we partitioned the same interval $I_{k_i}$ into the same disjoint subintervals as in~\eqref{eq:partition-into-disjoint-union}, since the partition does not depend on the step number.

    Recall $I_{t+1} = I^{(s)}_{k_i}$, and fix $s'$ such that $I_{k'_{i+1}}\subseteq I^{(s')}_{k_i}$.

    If $s'>s$, the choice of $s'$ would imply $\frac{\mu(U^{(k'_{i+1})}\cap I^{(s)}_{k_i})}{\mu(I^{(s)}_{k_i})} > \tfrac{1}{2}$; and thus, by $U^{(k'_{i+1})}\subseteq U^{(t+1)}$, also
    \[\frac{\mu(U^{(t+1)}\cap I^{(s)}_{k_i})}{\mu(I^{(s)}_{k_i})} > \tfrac{1}{2},\]
    contradicting the choice of $s$.

    If $s' = s$, and thus $I_{t+1} = I^{(s)}_{k_i} = I^{(s')}_{k_i}\supseteq I_{k'_{i+1}}$, the choice of $i$ would imply $\frac{\mu(U^{(t+1)}\cap I^{(s)}_{k_i})}{\mu(I^{(s)}_{k_i})} = \frac{\mu(U^{(t+1)}\cap I_{k_{i+1}})}{\mu(I_{k_{i+1}})} > \tfrac{1}{2}$, again contradicting the choice of $s$.

    Therefore $s' < s$. If $i$ is even, by construction
    \[r(I^{(0)}_{k_i})>l(I^{(0)}_{k_i}) = r(I^{(1)}_{k_i})>l(I^{(1)}_{k_i}) = r(I^{(2)}_{k_i})>\dots>b_{k_i};\]
    thus $s'<s$ implies $r(J_{t+1}) = l(I_{t+1})< r(I_{t+1}) = r(I^{(s)}_{k_i})\leq l(I^{(s')}_{k_i}) = l(I_{t'})$, and hence $I_{t+1}\cap I_{t'} = \emptyset$ and $J_{t+1}\cap I_{t'} = \emptyset$. The case $i$ odd is symmetric, with
    \[l(I^{(0)}_{k_i})<r(I^{(0)}_{k_i}) = l(I^{(1)}_{k_i})<r(I^{(1)}_{k_i}) = l(I^{(2)}_{k_i})<\dots<b_{k_i}.\]
\end{itemize}

To prove~\eqref{eq:induction:J-is-empty} for step $t+1$, it suffices to check that $b_{t+1}\notin J_{t+1}$, that $b_{t+1}\notin J_{k_j}$ for all $j\in\{0,\dots,i\}$, and that $b_n\notin J_{t+1}$ for all $n\leq t$.
\begin{itemize}
    \item $b_{t+1}\notin J_{t+1}$ is straightforward since $b_{t+1}\in \overline{I_{t+1}}$ and $J_{t+1}\cap \overline{I_{t+1}} = \emptyset$ by construction.

    \item For every $j\in\{0,\dots,i\}$, $b_{t+1}\in \overline{I_{t+1}}\subseteq \overline{I_{k_j}}$ by~\eqref{eq:induction:nested-intervals} for step $t+1$ and $\overline{I_{k_j}}\cap J_{k_j} = \emptyset$ by construction. Therefore $b_{t+1}\notin J_{k_j}$.


    \item Fix $n\leq t$ and consider the index chain $\xi(n)$ of step $n$. For all $n = k_j$ with $j\in\{0,\dots,i\}$, $b_n = b_{k_j}$ is a boundary point of $I_{k_j}$ by the definition of step $k_j$ and $J_{t+1}\subseteq I_{k_i}\subseteq I_{k_j}$ by~\eqref{eq:induction:nested-intervals}, hence $b_n\notin J_{t+1}$.

    If the index chain of step $n$ has the form $\xi(n) = (k_0,\dots,k_{i-l},k'_{i-l+1},\dots,n)$ with $l\in\{0,\dots,j\}$, $k'_{i-l+1}\neq n$, and $k'_{i-l+1}\neq k_{i-l+1}$ in case $l>0$, then $b_n\in \overline{I_{k'_{i-l+1}}}$ by~\eqref{eq:induction:nested-intervals} for step $n$, while $J_{t+1}\cap I_{k'_{i-l+1}} = \emptyset$ by~\eqref{eq:induction:disjoint-intervals}; therefore, $b_n\notin J_{t+1}$.

    Finally, if the index chain of step $n$ has the form $\xi(n) = (k_0,\dots,k_{i-l},n)$ with $l\in\{0,\dots,j\}$, $n\neq k_{i-l+1}$ in case $l>0$, and $n\neq t+1$ in case $l=0$, then $b_n$ is a boundary point of $I_n$ by the construction of step $n$ and $J_{t+1}\cap I_n = \emptyset$ by~\eqref{eq:induction:disjoint-intervals}. Hence $b_n\notin J_{t+1}$.
\end{itemize}

\paragraph*{The sequence converges.}
First, for every construction step $t$ and its index chain $\xi(t) = (k_0=0, k_1,\dots,k_m=t)$, $\overline{I_0} = [0,1]$ and $|I_0| = 1$, hence by~\eqref{eq:induction:|I|=|J|},
\[|I_{k_m}|\leq \tfrac{1}{2}|I_{k_{m-1}}|\leq \dots \leq \tfrac{1}{2^m}|I_0| = \tfrac{1}{2^m}.\]
Thus, by~\eqref{eq:induction:nested-intervals}, in order to prove the convergence of $(b_0,b_1,\dots)$, it suffices to show that there exist infinitely many \defhigh{stable} indices~$\tilde k_m$, i.e., indices~$\tilde k_m$ such that every construction step~$t>\tilde k_m$ has an index chain of the form $(\tilde k_0,\tilde k_1,\dots,\tilde k_m,\dots,t)$.

Assume, for contradiction, that there are only finitely many stable indices, and let $\tilde k_m$ be the maximal stable index. Then, for every $t>\tilde k_m$, the index chain of $t$ has the form $\xi(t) = (\tilde k_0,\dots,\tilde k_m, k^{(t)}_{m+1},\dots,t)$ with $k^{(t)}_{m+1}$ unstable and $I_{k^{(t)}_{m+1}} = I^{(s)}_{\tilde k_m}$ for an appropriate $s\geq 1$, where $I_{\tilde k_m} = I^{(1)}_{\tilde k_m}\cupdisjoint I^{(2)}_{\tilde k_m}\cupdisjoint\dots$ is the partition defined in~\eqref{eq:partition-into-disjoint-union} (which does not depend on the step number).

Thus we can fix the sequence of \defhigh{mind change indices} $(t_1, t_2,\dots)$ such that $\tilde k_m < t_1 < t_2 < \dots$ and
\begin{align*}
\xi(t_1)&= (\tilde k_0,\dots,\tilde k_m, t_1)\text{ with }I_{t_1} = I^{(s_1)}_{\tilde k_m}\text{ for an appropriate }s_1,\\
\xi(t_2-1)&= (\tilde k_0,\dots,\tilde k_m, t_1,\dots),\\
\xi(t_2)&= (\tilde k_0,\dots,\tilde k_m, t_2)\text{ with }I_{t_2} = I^{(s_2)}_{\tilde k_m}\text{ for an appropriate }s_2,\\
&\vdots
\end{align*}
This sequence is infinite since none of $t_1,t_2,\dots$ is stable. Note that $1\leq s_1<s_2<s_3<\dots$ from the discussion above, and that $t_1 = \tilde k_m + 1$ by construction of step $\tilde k_m + 1$ since all indices $\tilde k_0,\dots,\tilde k_m$ in the index chain of step $\tilde k_m$ are stable.

For every $t\geq \tilde k_m$, both index chains of steps $t$ and $t+1$ start with $(\tilde k_0,\dots,\tilde k_m)$. By construction of step $t+1$, this implies
\begin{equation}\label{eq:bound-on-Ukm}
    \frac{\mu(U^{(t)}\cap I_{\tilde k_m})}{\mu(I_{\tilde k_m})}\leq \tfrac{1}{2}.
\end{equation}
By $s_2 > s_1\geq 1$, the choice of $s_2$ at step $t_2$ gives
\begin{equation}\label{eq:a-little-bit-greater}
    \frac{\mu(U^{(t_2)}\cap I^{(1)}_{\tilde k_m})}{\mu(I^{(1)}_{\tilde k_m})} = \tfrac{1}{2} + \varepsilon\quad\text{for some }\varepsilon>0.
\end{equation}

Fix a natural $N$ such that $\frac{1}{2}(1 - \frac{1}{m+2})^N < \frac{\varepsilon}{m+2}$, and consider step $t_N$. From $1\leq s_1<s_2<\dots<s_N$, we know $s_N\geq N$. We assume $m$ is even (the case $m$ odd is symmetric). By the choice of $s_N$,
\[\frac{\mu(U^{(t_N)}\cap I^{(s)}_{\tilde k_m})}{\mu(I^{(s)}_{\tilde k_m})} > \tfrac{1}{2}\quad\text{for every }s\in\{1,\dots,s_N - 1\}.\]
Combining these inequalities for $s\in\{2,\dots,s_N\}$ together with~\eqref{eq:a-little-bit-greater} for $s=1$, and using
\[I_{\tilde k_m} = \big[b_{\tilde k_m}, b_{\tilde k_m} + (1 - \tfrac{1}{m+2})^{s_N}|I_{\tilde k_m}|\big]\cupdisjoint I^{(s_N)}_{\tilde k_m}\cupdisjoint I^{(s_N - 1)}_{\tilde k_m}\cupdisjoint\dots\cupdisjoint I^{(1)}_{\tilde k_m},\]
we obtain
\begin{align*}
\mu(U^{(t_N)}\cap I_{\tilde k_m})&\geq \sum_{s=1}^{s_N}\mu(U^{(t_N)}\cap I^{(s)}_{\tilde k_m}) = \sum_{s=2}^{s_N}\mu(U^{(t_N)}\cap I^{(s)}_{\tilde k_m}) + \mu(U^{(t_N)}\cap I^{(1)}_{\tilde k_m})\\
&> \tfrac{1}{2}\sum_{s=2}^{s_N}\mu(I^{(s)}_{\tilde k_m}) + \tfrac{1}{2}\mu(I^{(1)}_{\tilde k_m}) + \varepsilon\,\mu(I^{(1)}_{\tilde k_m})\\
&= \tfrac{1}{2}\,\mu\Big(\bigcup_{s=1}^{s_N}I^{(s)}_{\tilde k_m}\Big) + \varepsilon\cdot\tfrac{1}{m+2}|I_{\tilde k_m}|\\
&= \big(\tfrac{1}{2}|I_{\tilde k_m}| - \tfrac{1}{2}(1 - \tfrac{1}{m+2})^{s_N}|I_{\tilde k_m}|\big) + \tfrac{\varepsilon}{m+2}|I_{\tilde k_m}|\\
&= \tfrac{1}{}|I_{\tilde k_m}| + \big(\tfrac{\varepsilon}{m+2} - \tfrac{1}{2}(1 - \tfrac{1}{m+2})^{s_N}\big)|I_{\tilde k_m}| > \tfrac{1}{2}|I_{\tilde k_m}|,
\end{align*}
contradicting~\eqref{eq:bound-on-Ukm} for $t = t_N$. Here the last inequality holds because $s_N\geq N$.

\paragraph*{The limit point is Martin-Löf random.}
Since the constructed sequence $b_0,b_1,\dots$ is a computable approximation, denote its limit by $\beta$.

By the previous discussion, in order to prove that $\beta$ is Martin-Löf random, it suffices to show that $\beta$ is not covered by $U$. We argue by contradiction: if $\beta\in U = \bigcup_{i=0}^{\infty}U_i$, since $U$ is c.e.\ open, we can fix a natural $N$ such that
\begin{equation}\label{eq:beta-in-U}
[\beta - \tfrac{1}{N}, \beta + \tfrac{1}{N}]\subseteq U.
\end{equation}

Let $\tilde k_N$ be the $N$th stable index with corresponding index chain $(\tilde k_0,\dots,\tilde k_N)$. Since all $b_t$ with $t>\tilde k_N$ lie in $I_{\tilde k_N}$ by~\eqref{eq:induction:nested-intervals}, by a compactness argument
\begin{equation}\label{eq:beta-in-Iknn}
\beta\in \overline{I_{\tilde k_N}}.
\end{equation}
Applying~\eqref{eq:induction:|I|=|J|} iteratively yields
\begin{equation}\label{eq:Iknn-small}
|I_{\tilde k_N}| \leq \tfrac{N}{N+1}|I_{\tilde k_{N-1}}|\leq \tfrac{N(N-1)}{(N+1)N}|I_{\tilde k_{N-2}}|\leq \dots \leq \tfrac{N!}{(N+1)!}|I_{k_0}| = \tfrac{1}{N+1} < \tfrac{1}{N}.
\end{equation}
From \eqref{eq:beta-in-U} through \eqref{eq:Iknn-small}, $\overline{I_{\tilde k_N}}\subseteq[\beta - \tfrac{1}{N},\beta + \tfrac{1}{N}]\subseteq U$, so
\begin{equation}\label{eq:contradiction-eq}
\frac{\mu(U\cap I_{\tilde k_N})}{\mu(I_{\tilde k_N})} = 1.
\end{equation}
On the other hand, since $\tilde k_N$ is stable, for every $t>\tilde k_N$, by construction of step~$t$,
$\frac{\mu(U^{(t)}\cap I_{\tilde k_N})}{\mu(I_{\tilde k_N})}\leq \tfrac{1}{2}$.
Since $\mu(U^{(t)})\to \mu(U)$, this implies
$\frac{\mu(U\cap I_{\tilde k_N})}{\mu(I_{\tilde k_N})}\leq \tfrac{1}{2}$, contradicting~\eqref{eq:contradiction-eq}.\qed

\begin{figure}[ht]
\centering
\includegraphics[width=0.85\linewidth]{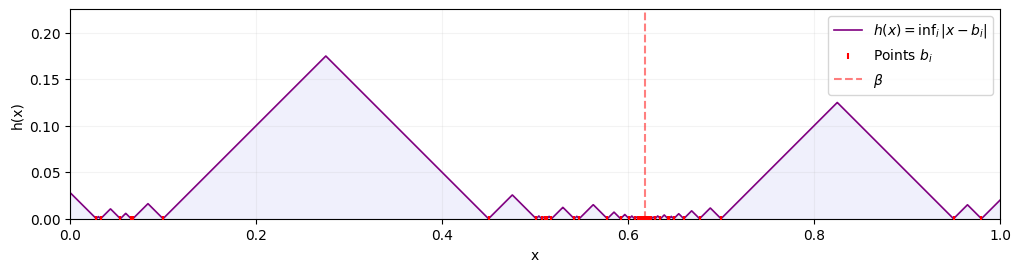}\\[1ex]
\includegraphics[width=0.85\linewidth]{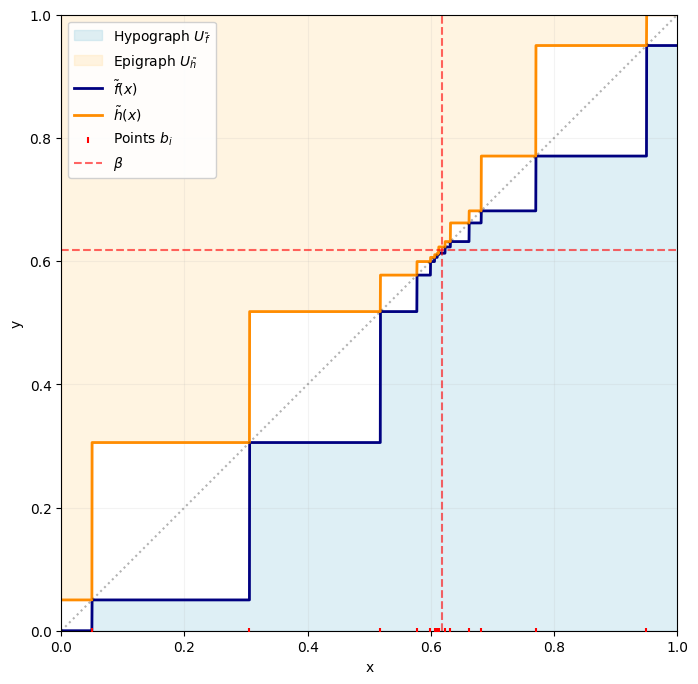}
\caption{\textbf{Upper}: plot of $h$. \textbf{Lower}: plot of $\tilde f$ and $\tilde h$.}
\label{fig:plots}
\end{figure}

\end{document}